\newcommand{\bsa}{{\boldsymbol{a}}}
\newcommand{\bsy}{{\boldsymbol{y}}}
\newcommand{\rmd}{{\mathrm{d}}}
\newcommand{\rmr}{{\mathrm{r}}}
\newcommand{\rmK}{{\mathrm{K}}}
\newcommand{\bbM}{{\mathbb{M}}}
\newcommand{\N}{{\mathbb{N}}} % natural numbers {1, 2, ...}
\newcommand{\R}{{\mathbb{R}}} % reals
\newcommand{\RR}{{\mathbb{R}}} % reals
\DeclareSymbolFont{bbold}{U}{bbold}{m}{n}
\DeclareSymbolFontAlphabet{\mathbbold}{bbold}
\newcommand{\calN}{{\mathcal{N}}}
\newtheorem{theorem}{Theorem}
\newtheorem{lemma}[theorem]{Lemma}
\newtheorem{corollary}[theorem]{Corollary}
\newtheorem{proposition}[theorem]{Proposition}
\newtheorem{assumption}[theorem]{Assumption}
\begin{document}

%\title[Direct and Inverse Results  for Meshless Methods]
\title{Direct and Inverse Results on Bounded Domains for Meshless Methods via Localized Bases on Manifolds
\thanks{Thomas Hangelbroek is supported by grant DMS-1413726 from the National Science Foundation.
Francis Narcowich and Joseph Ward are supported by grant DMS-1514789 from the National Science Foundation.
Christian Rieger is supported by (SFB) 1060 of the Deutsche Forschungsgemeinschaft.}
}
% Use 
%\titlerunning{Direct and Inverse Results  for Meshless Methods} 
%for an abbreviated title

\author{ T. Hangelbroek\and F. J. Narcowich \and  C. Rieger \and J. D. Ward}
% Use \authorrunning{Short Author} for an abbreviated list of authors

%\institute{
%T. Hangelbroek
%\at
%Department of Mathematics, University of Hawaii - Manoa,
%Honolulu, HI, USA.
%%
%\and
%%
% F. J. Narcowich \and J. D. Ward (\Letter)
% \at Department of Mathematics, Texas A\&M
%   University, College Station, TX 77843, USA \\
% \email{fnarc@math.tamu.edu; jward@math.tamu.edu}
% %
% \and
% %
%C. Rieger
% \at Institut f{\" u}r Numerische Simulation,
%Universit{\"a}t Bonn,
% Wegelerstr. 6,
%53115 Bonn, Germany. \\
% \email{rieger@ins.uni-bonn.de}
%}

\maketitle

% List each author here on a separate line
% "Lastname, Firstname Initials." as they should appear in the index
\index{Hangelbroek, T.}
\index{Narcowich, F.J.}
\index{Rieger, C.}
\index{Ward, J.D.}

\center{\bf{Dedicated to Ian H. Sloan on the occasion of his 80th Birthday}}

\abstract{
This article develops direct and inverse estimates for certain finite dimensional spaces 
arising in kernel approximation.  
Both the direct and inverse estimates are based on approximation spaces spanned by local Lagrange functions 
which are spatially highly localized.  
The construction of such functions is computationally efficient 
and generalizes the construction given in \cite{HNRW2} for restricted surface splines on $\R^d$.  
The kernels for which the theory applies includes the 
Sobolev-Mat\'ern kernels for closed, compact, connected, $C^\infty$ Riemannian manifolds.
}

\section{Introduction}\label{sec:1}

This article investigates both direct estimates and inverse inequalities for certain finite dimensional spaces 
of functions.  
These spaces are spanned by either Lagrange or local Lagrange functions generated by 
certain positive definite or conditionally 
positive definite kernels.

While the topics of direct and inverse theorems for kernel-based approximation spaces 
have been considered 
in the boundary-free setting by a number of authors
(see  \cite{NWW_Bernstein}, \cite{rieger2008sampling}, 
\cite{MNPW}, \cite{Ward_J}, \cite{griebel2013multiscale} 
as a partial list), 
the results for such theorems on compact domains is less well developed.  
The main results in this article pertain to inverse estimates (Section~\ref{main_results}) 
and direct theorems 
(Section~\ref{quasi_approx}) 
for certain kernel based approximation spaces on compact domains in a fairly 
general setting.

The primary focus of this article pertains to certain positive definite kernels defined 
on a closed, compact, 
connected, $C^\infty$ Riemannian manifold, which will be denoted by $\bbM$ throughout the sequel.  
We restrict to this setting; inverse theorems in the  Euclidean space setting were recently given 
in  \cite{HNRW2}.

Rather than dealing with the standard finite dimensional kernel spaces 
$S(X)=\text{span}_{\xi\in X} k(\cdot,\xi)$, 
where $k(\cdot,\cdot)$ is a positive definite kernel and $X$, the set of centers, 
is a suitably chosen finite set of points, we will consider subspaces of $S(X)$ that are generated 
by Lagrange functions $\{\chi_\xi:\xi\in X\}$, which, 
for certain kernels, are highly localized. 
These subspaces are designed to deal with problems involving a compact domain  
$\Omega \subsetneq \bbM$, 
where $\Omega$ is subject to some mild restrictions discussed in Section~\ref{bounded_domain}.

Specifically, we look at spaces of the form $V_\varXi =\text{span}_{\xi\in\varXi} \chi_\xi$, 
where $\chi_\xi$ is a Lagrange function for $X$, which is assumed to be suitably dense 
in a neighborhood of $\Omega$, and $\varXi$ is a subset of $X$.  
An important feature, perhaps unusual for RBF and kernel approximation, 
is that the centers $X$ used to \emph{construct} the Lagrange functions $\{\chi_\xi:\xi\in X\}$ 
and centers $\varXi$ \emph{defining} the function spaces $V_\varXi$, do not always coincide, 
because $V_\varXi$ comprises only the Lagrange functions associated with $\xi\in \varXi$. 
The spaces $V_\varXi$ differ slightly from $S(X)$ and are important for obtaining inverse estimates 
over $\Omega$. 
We will discuss these spaces in Section~\ref{Cond} and provide inverse estimates 
in Theorem~\ref{full_inverse}.
We also consider, in Section~\ref{local_lagrange}, locally (and efficiently) constructed functions $b_\xi$, 
which we call \emph{local} Lagrange functions. 
These have properties similar to the $\chi_\xi$'s and also to those used in \cite{DR}.  
In Theorem~\ref{lowercomparison_omega}, we give inverse estimates 
for $\tilde{V}_\varXi =\text{span}_{\xi\in\varXi}b_\xi$.
%

%%%%
\subsection{Overview and Outline}
In Section~\ref{background}, a basic explanation and background on the manifolds and kernels used in 
this article will be given.

The direct and inverse theorems in this paper are associated with two approximation spaces 
$V_\varXi$ and $\tilde{V}_\varXi$.  
In Section~\ref{lag_bernstein}, we introduce the Lagrange basis 
(the functions which form a basis for the space $V_\varXi$) associated with the 
kernels described in Section~\ref{SSS:Kernels}.  
Such Lagrange functions are known to have stationary exponential decay 
(this notion is introduced in Section \ref{SS:Lagrange}). 
%Definition~\ref{extra}).  
To illustrate the power of these highly localized bases, 
we finish the section by providing estimates that control 
the Sobolev norm (i.e. $W^{\sigma}_{2}(\bbM)$) of a function in $V_\varXi$ 
by the $l_2$ norm on the Lagrange coefficients.  
That is, for $s=\sum_{\xi\in\varXi}a_\xi \chi_\xi$ we show
$$\| s\|_{W^{\sigma}_{2}(\bbM)}\leq C h^{d/2 -\sigma}\|(a_\xi )_{\xi\in\varXi}\|_{l^2 (\varXi)}.$$
This estimate is a crucial first step for the inverse estimates.

Section~\ref{local_lagrange} introduces the other stable basis considered in this paper: 
the local Lagrange basis, which generates the space $\tilde{V}_\varXi$.  
Unlike the Lagrange functions, the local Lagrange bases will be shown to be computationally efficient 
while enjoying many of the same (or similar) properties of the Lagrange bases.  
The local Lagrange bases, which generate the spaces $\tilde{V}_\varXi$, 
provide the focal point of this paper.  
We first give sufficient conditions to prove existence and stability
of such a basis, given Lagrange functions with stationary exponential decay.  
The section culminates with Theorem~\ref{main_local_bernstein} which states that 
there is a constant $C$ so that 
for any $s=\sum_{\xi\in\varXi}a_\xi b_\xi$ 
%and for any compact $W\subset\bbM$,
%there is a constant $C_W$ so that
$$\| s\|_{W^{\sigma}_{2}(\bbM)}\leq C h^{d/2-\sigma}\|(a_\xi )_{\sigma\in\varXi}\|_{l^2 (\varXi )}$$
holds.

Section~\ref{Cond} provides lower stability estimates (i.e. bounding $\|s\|_{L_{2}}$ below 
in terms of the coefficients $\|(a_\xi )_{\xi\in\varXi}\|_{l^2}$) for elements of either 
$V_\varXi$ or $\tilde{V}_\varXi$.
Section~\ref{main_results} presents the complete Sobolev inverse estimates for both the spaces 
$V_\varXi$ and $\tilde{V}_\varXi$ in Theorems~\ref{full_inverse} 
and \ref{lowercomparison_omega} respectively.

Finally in Section~\ref{quasi_approx} the direct theorems are given. 
More specifically, both spaces $V_\varXi$ and $\tilde{V}_\varXi$ 
are shown to provide approximation orders 
for functions of varying smoothness. 
For a continuous function $f$ with no known additional orders of smoothness, 
Theorem~\ref{local} shows that both the interpolant
$I_\varXi f$ or the quasi-interpolant $Q_\varXi f$ approximate $f$ pointwise 
at a rate comparable to the pointwise modulus of continuity $\omega(f, Kh|\ln h|,x_0)$ where
$$\omega(f,t,x_0):=\max_{|x-x_0 |\leq t}|f(x)-f(x_0 )|.$$
These are the first pointwise estimates of their kind for RBF approximation schemes.

The next result applies to smoother functions $f$.  
For a point set $\varXi_e$ which is quasi-uniform over the manifold $\bbM$ 
and given kernel $\kappa_m$, we show that the smoothness of $f$ is captured in the estimate
 \[
 \mathrm{dist}_{p,\mathbb{M}}(f, S(\Xi))
 \le 
 Ch^{\sigma} \|f\|_{B_{p,\infty}^{\sigma}}, \ 1\le p\le \infty, \ 0<\sigma \le 2m,
 \]
where  the Besov space $B_{p,\infty}^{\sigma}(\mathbb{M})$ is defined in \eqref{besov_def}.

Our final result shows that optimal $L_\infty$ approximation rates, 
when approximating a smooth function $f$ on $\Omega$ can be obtained from data sites contained 
in a set ``slightly larger'' than $\Omega$.  
The result illustrates the local nature of the bases $\{\chi_\xi \}$ or $\{b_\xi \}$.

Let $f\in C^k (\Omega)$ and let $f_e \in C^k (\bbM)$ be a smooth extension of $f$ to $\bbM$, 
i.e., $f_e|_\Omega=f|_\Omega$.  
Let $\mathcal{S}=\{x\in\bbM\backslash\Omega,\text{ dist}(x,\Omega)\leq Kh\log h^{-1}\}$ 
and $\varXi$ a discrete quasi-uniform set contained in $\Omega\cup\mathcal{S}$ with fill distance $h$.  
Finally let $\varXi_e$ be a quasi-uniform extension of $\varXi$ to all of $\bbM$ as given 
in Lemma ~\ref{Extension}.  
Also let $\kappa_m$ be a kernel as described in Section~\ref{SSS:Kernels} with associated spaces
$$
\tilde{V}_{\varXi_e}
=\text{span}_{\xi\in\varXi_e}\{b_\xi \}\text{ and }\tilde{V}_{\varXi} 
= 
\text{span}_{\xi\in\varXi}\{b_\xi \}.
$$
The result then states that
$
\text{dist}_{\infty,\Omega}(f,\tilde{V}_\varXi )
\sim
\text{dist}_{\infty,\bbM}(f_e ,\tilde{V}_{\varXi_e})
$
-- that is they are within constant multiples of each other. The upshot is that there are several results on estimating 
$\text{dist}_{\infty,\bbM}(f_e ,\tilde{V}_{\varXi_e}).$

%%%%%%%%
\section{Background: Manifolds and kernels}\label{background}

\subsection{The Manifold $\bbM$}\label{SS:manifolds}
As  noted above, throughout this article $\bbM$ is assumed to be a closed,  compact, connected, 
$C^\infty$ Riemannian manifold. 
The metric for $\bbM$, in local coordinates $(x^1,\cdots, x^d)$, will be denoted by $g_{j,k}$ 
and the %corresponding 
volume element by $\rmd\mu=\sqrt{\det(g_{j,k})}\rmd x^1\cdots \rmd x^d$. 
Such manifolds have the following properties:

\begin{enumerate}
\item \label{completeness} \emph{Geodesic completeness}. 
$\bbM$ is geodesically complete, by the Hopf-Rinow Theorem \cite[Section 7.2]{doC1}. 
Thus, $\bbM$ is a metric space with the distance $\mathrm{dist}(x,y)$ between $x,y\in \bbM$ 
given by the length of the shortest geodesic joining $x$ and $y$. 
The \emph{diameter} of $\bbM$, which is finite by virtue of the compactness of $\bbM$, 
will be denoted by $\rmd_\bbM$. 
The \emph{injectivity radius} $\rmr_\bbM$, \cite[p.~271]{doC1}, 
which is the infimum of the radius of the smallest ball on which geodesic normal coordinates 
are non singular,  
is positive and finite. 
Of course, $\rmr_\bbM\le \rmd_\bbM$. 

\item \label{vol_Lp} \emph{$L_p$ embeddings}. 
For $\Omega\subset \bbM$, we define $\mathrm{vol}(\Omega)=\int_{\Omega}\rmd\mu$. 
In addition, with respect to $d\mu$, the inner product $\langle \cdot,\cdot\rangle$ 
and all $L_p$ norms are defined in the usual way, and these standard embeddings hold:
\[
L_p(\bbM) \subset L_q(\bbM)\ \text{for}\ 1\le q\le p\le \infty
\]

\item \label{bounded_geometry} \emph{Bounded geometry}. 
$\bbM$ has bounded geometry \cite{cheeger_etal1982, triebel1992}, 
which means that $\bbM$ has  a positive injectivity radius and that derivatives of the Riemannian metric 
are bounded (see \cite[Section 2]{HNW} for details). 
This fact already implies the Sobolev embedding theorem, as well as a smooth family 
of local diffeomorphisms (uniform metric isomorphisms), \cite[(2.6)]{HNW}, 
which induce a  family of metric isomorphisms \cite[Lemma 3.2]{HNW} 
between Sobolev spaces on $\bbM$ and on $\RR^d$.

\item \label{item_manifold} \emph{Volume comparisons}. 
Denote the (geodesic) ball centered at $x\in \bbM$ and having radius $r$ by $B(x,r)$, 
where $0<r\le \rmd_\bbM$. 
There exist constants $0<\alpha_\bbM<\beta_\bbM<\infty$  so that, for all $0<r\le \rmd_\bbM$,
\begin{equation}\label{balls}
\alpha_\bbM r^d \le \mathrm{vol}(B(x,r))\le \beta_\bbM r^d.
\end{equation}
This inequality requires the volume comparison theorem of Bishop and Gromov 
\cite{Eschenburg-1987,Grove-1987}. 
See Section~\ref{appendix_vol_comp} for a proof and explicit estimates on $\alpha_\bbM$ and 
$\beta_\bbM$.
\end{enumerate}

\subsubsection{Point sets}
 Given a set $D\subset \bbM$ and a finite set $X \subset D$, 
 we define its  \emph{fill distance} (or \emph{mesh norm}) $h$ 
and the \emph{separation radius} $q$ to be:
\begin{equation} \label{minimal-separation}
 h(X,D):=\sup_{x\in D} {\mathrm{dist}}(x,X)\qquad \text{and}\qquad  q(X):=\frac12 
\inf_{\xi,\zeta\in X, \xi\ne \zeta} 
{\mathrm{dist}}(\xi,\zeta).
\end{equation}  
The \emph{mesh ratio} $\rho:=h(X,D)/q(X)$ measures the uniformity of the 
distribution of $X$ in $D$. If $\rho$ is bounded, then we say that the point set $X$ is 
quasi-uniformly distributed (in $D$), or simply that $X$ is quasi-uniform.

We remark that for quasi-uniform $X$ and any $\xi\in X$,  
we have, as a consequence of \eqref{balls}, this useful inequality:
\begin{lemma} \label{sum_est}
Let $h=h(X,\mathbb M)$ and let $f:[0,\infty) \to [0,\infty)$ be decreasing and satisfy the following: 
There is a continuous function $g:[0,\infty) \to [0,\infty)$ 
such that $f(xh)\le g(x)$ and that $x^{d-1}g(x)$ is decreasing for $x\ge 1$, 
and is integrable  on $[0,\infty)$. 
Then 
 \begin{equation}\label{decreasing}
 \sum_{\zeta\ne \xi\in X} f({\mathrm{dist}}(\zeta,\xi)) 
 \le 
 \frac{2^dd\,\rho^d\beta_\bbM}{\alpha_\bbM} \int_0^{\infty} g(r) r^{d-1} \rmd r.
 \end{equation}
 \end{lemma}
\begin{proof}
Divide $\bbM$ into $N\approx \rmd_\bbM/h$ annuli $\bsa_n$, 
with center $\xi$ and inner and outer radii $(n-1) h$ 
and $nh$, $n\ge 2$. 
The cardinality of centers in each annulus $\bsa_n$ is approximately 
\[
\# \bsa_n \approx \frac{\mathrm{vol}(B(\xi,nh))-\mathrm{vol}(B(\xi,(n-1)h))}{\mathrm{vol}(B(\xi,q))}.
\]
By \eqref{balls}, we see that  
\[
\#\bsa_n 
\approx 
\frac{\beta_\bbM}{\alpha_\bbM} \frac{(nh)^d-((n-1)h)^d}{q^d}
\le 
\frac{d\beta_\bbM}{q^d\alpha_\bbM} h^d n^{d-1}
=
\frac{d\rho^d \beta_\bbM}{\alpha_\bbM} n^{d-1}
\]
By the assumption that $f$ is decreasing, we have, using $n^{d-1}\le 2^d(n-1)^{d-1}$, $n\ge 2$,  
\begin{eqnarray*}
\sum_{\zeta\in \bsa_n} f({\mathrm{dist}}(\xi,\zeta)) 
&\le& 
\frac{d\rho^d\beta_\bbM}{\alpha_\bbM} f((n-1)h)n^{d-1}\\
&\le& 
\frac{2^d d\,\rho^d\beta_\bbM}{\alpha_\bbM} g((n-1))(n-1)^{d-1}.
\end{eqnarray*}
We have $\sum_{n=2}^Ng((n-1))(n-1)^{d-1} \le \int_0^\infty g(r)r^{d-1} \rmd r$, 
since $g(x)x^{d-1}$ is decreasing. This and the previous inequality then imply \eqref{decreasing}.
\end{proof}

Given $D$  and $X \subset D$, 
we wish to find an extension $\widetilde X \supset X$ so that 
the separation radius is not decreased and the fill distance is controlled.
%%%%
\begin{lemma}\label{Extension}
Suppose $X \subset D \subset \mathbb M$ has fill distance $h(X,D) = h$ 
and separation radius $q(X) = q$.
Then there is a finite set $\widetilde X$ so that   $\widetilde X\cap D = X$,
$q(\widetilde X) = \min(q,h/2)$
and $h(\widetilde X,\bbM) =  h$.
\end{lemma}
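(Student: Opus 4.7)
The plan is to construct $\widetilde{\Xi}$ by greedy (Zorn-based) padding of $\Xi$ with new points lying outside $\Omega$, at the target pairwise separation. Set $\delta := \min(2q, h)$ and consider the family $\mathcal{F}$ of all $S \subset \man$ satisfying (i) $\Xi \subset S$, (ii) $S \cap \Omega = \Xi$, and (iii) $\dist(\xi,\zeta) \ge \delta$ for every pair of distinct $\xi,\zeta \in S$. Since $\Xi$ itself lies in $\mathcal{F}$ (its pairwise distances are at least $2q \ge \delta$) and the family is closed under unions of chains, Zorn's lemma produces a maximal element, which I take as $\widetilde{\Xi}$.

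I then read off the three claimed properties. Conditions (ii)--(iii) immediately give $\widetilde{\Xi} \cap \Omega = \Xi$ and $q(\widetilde{\Xi}) \ge \delta/2 = \min(q, h/2)$, while $\Xi \subset \widetilde{\Xi}$ supplies the reverse bound $q(\widetilde{\Xi}) \le q$; in the subcase $q > h/2$ the stated equality $q(\widetilde{\Xi}) = h/2$ is enforced by inserting, during the greedy construction, at least one new center at pairwise distance exactly $h$ from the nearest existing site (for instance, a point of $\man \setminus \Omega$ at distance $h$ from a boundary-adjacent $\xi \in \Xi$, which exists because $h(\Xi,\Omega) = h$). For the fill distance, let $x \in \man$. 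If $x \in \Omega$, then $\dist(x, \widetilde{\Xi}) \le \dist(x, \Xi) \le h(\Xi,\Omega) = h$. If $x \in \man \setminus \Omega$ and $\dist(x,\widetilde{\Xi}) > h$, then $\dist(x,y) > h \ge \delta$ for every $y \in \widetilde{\Xi}$, so $\widetilde{\Xi} \cup \{x\} \in \mathcal{F}$, contradicting maximality; hence $h(\widetilde{\Xi},\man) \le h$.

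The main subtleties are cosmetic rather than deep. A straight Zorn argument yields two-sided bounds on both $q(\widetilde{\Xi})$ and $h(\widetilde{\Xi},\man)$ but not the exact equalities stated; these require only slightly more controlled greedy insertions, as noted above. Separately, the word \emph{finite} in the conclusion should be read as \emph{locally finite} when $\man$ is non-compact (e.g.\ $\man = \reals^d$), since no set with fill distance $h$ can be finite there. For the applications in this paper it is enough to restrict the construction to the compact neighborhood $\{x \in \man : \dist(x,\Omega) \le K h |\log h|\}$, on which the volume bound in Assumption \ref{manifold} makes $\widetilde{\Xi}$ genuinely finite, and the Zorn argument applies verbatim to that restricted ambient set.
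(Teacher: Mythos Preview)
Your argument is correct and follows essentially the same route as the paper: both construct $\widetilde{\Xi}$ by a Zorn/maximal-packing argument, adding well-separated points to $\Xi$ until no further point can be inserted. The only technical variation is where the new points are allowed to sit: the paper restricts them to $Z = \man \setminus \bigcup_{\xi\in\Xi} B(\xi,h)$ and uses pairwise separation $h$ among the new points, whereas you restrict them to $\man\setminus\Omega$ and impose the uniform separation $\delta=\min(2q,h)$ against all of $\widetilde{\Xi}$. Both choices deliver the same bounds $q(\widetilde{\Xi})\ge\min(q,h/2)$ and $h(\widetilde{\Xi},\man)\le h$; your version has the minor advantage that the condition $\widetilde{\Xi}\cap\Omega=\Xi$ is built directly into the family $\mathcal{F}$, while in the paper's construction one must observe separately that $Z\cap\Omega$ is essentially empty. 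Your remarks about the exact equalities and about ``finite'' meaning ``locally finite'' on noncompact $\man$ are well taken and apply equally to the paper's own proof.
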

\begin{proof}
We extend $X$ by taking $Z = \bbM\setminus \bigcup_{\xi\in X} B(X,h)$. 
Cover $Z$ by a maximal $\epsilon$-net with $\epsilon = h$ as follows. 

Consider the set of discrete subsets 
$\mathcal{D} = \{D\subset Z\mid h(D,Z) = h, q(D) = h/2\}$. 
This is a partially ordered set under $\subset$ and therefore has a maximal element $D^*$ 
by Zorn's lemma.
This maximal element must satisfy $q(D^*) = h/2$ (since it's in $\mathcal{D}$) and must cover $Z$ 
(if $x\in Z\setminus \bigcup_{z\in D^*}B(z,h)$ then $D^*$ is not maximal). 
It follows that $\widetilde X  = X\cup D^*$ has fill distance $h(\widetilde X , \bbM) = h$ 
and $q(\widetilde X) = \min(q,h/2)$.
\end{proof}
\subsubsection{Sobolev spaces} 
We can define Sobolev spaces in a number of equivalent ways. 
In this article, we focus on $W_p^{\tau}(\Omega)$, where $\tau\in \N$ and  $1\le p<\infty$.
For $p=\infty$, we
make use of the short hand notation (usual for approximation theory) $W_{\infty}^{\tau}  = C^{\tau}$ 
(i.e., substituting the $L_{\infty}$ Sobolev space by the H{\" o}lder space).

Our definition is the one developed in \cite{Aub}, by using the covariant derivative operator. This
permits us to correctly define Sobolev norms and semi-norms on domains.
Namely, 
$$
\|f\|_{W_p^{\tau}(\Omega)} 
= \left(\sum_{k=0}^{\tau}\int_{\Omega} (\langle \nabla^k f,\nabla^k f\rangle_x)^{p/2} \rmd x\right)^{1/p}.
$$
See \cite[Chapter 2]{Aub}, \cite[Section 3]{HNW} or \cite[Chapter 7]{triebel1992} for details.

Here bounded geometry means that $\bbM$ has  a positive injectivity radius 
and that derivatives of the Riemannian metric are bounded
 (see \cite[Section 2]{HNW} for details).
 This fact already implies the Sobolev embedding theorem, 
 as well as a smooth family of local diffeomorphisms (uniform metric isomorphisms),
\cite[(2.6)]{HNW},
 which induce a  family of metric isomorphisms \cite[Lemma 3.2]{HNW} 
 between Sobolev spaces on $\bbM$ 
 and on $\RR^d$. 
\subsection{Sobolev-Mat{\'e}rn kernels}\label{SSS:Kernels}
The kernels we consider in this article are positive definite. 
Much of the theory extends to kernels that are conditionally positive definite; 
for a discussion, see \cite{HNW-p}.

A positive definite kernel  ${ k:\bbM\times \bbM \to \RR}$
satisfies the property that for every finite set $X \subset \bbM$, the collocation matrix 
$$\rmK_{X} :=(k(\xi,\zeta))_{\xi,\zeta\in X}$$ 
is strictly positive definite.

If ${ \tau>d/2}$, then ${ W_2^{\tau}(\bbM)}$ 
is a reproducing kernel Hilbert space, and its kernel is positive definite. 
Conversely, every continuous positive definite kernel is the reproducing kernel for a Hilbert space of 
continuous functions $\calN(k)$ on $\bbM$. 

The positive definite kernels we consider in this article are the Sobolev-Mat{\' e}rn kernels, which  
are the reproducing kernels for the Sobolev space $W_2^m(\bbM)$. 
These were introduced in \cite{HNW}; we will denote them by $\kappa_m$.  
They are also the fundamental solution of the elliptic differential operator, 
 $\mathcal{L} = \sum_{j=0}^m (\nabla^j)^*\nabla^j$ of order $2m$. 
This fact, although not used directly, 
is a key fact used to establish the stationary energy decay estimates 
considered in Section \ref{SS:Lagrange}.

For finite $X\subset \bbM$ we define $S(X) := \mathrm{span}_{\xi\in X}k(\cdot, \xi)$. 
The guaranteed  invertibility of $\rmK_{X}$ is of use in solving interpolation problems -- 
given $\bsy\in \RR^{X}$, one finds $\bsa\in\RR^{X}$ 
so that $\rmK_{X} \bsa = \bsy$. It follows that $\sum_{\xi\in X} a_{\xi} k(\cdot,\xi)$ 
is the unique interpolant to $({\xi},y_{\xi})_{\xi\in X}$ in $S(X)$. 
It is also the case that $\sum_{\xi\in X} a_{\xi} k(\cdot,\xi)$ 
is the interpolant  to $({\xi},y_{\xi})_{\xi\in X}$ with minimum
$\calN(k)$ norm.
%

%%%%%%%%
\section{Lagrange Functions and First Bernstein Inequalities}\label{lag_bernstein}

In this section we introduce the Lagrange functions, which are a localized basis generated by
the kernel $\kappa_m$.
After this we give our first class of Bernstein estimates, 
valid for linear combinations of Lagrange functions.

%%%%%
\subsection{Lagrange functions}\label{SS:Lagrange}
For a positive definite kernel $k$ and a finite 
$X\subset \bbM$, there exists a family of uniquely defined functions 
$(\chi_{\xi})_{\xi\in X} \subset S(X)$ that satisfy $\chi_{\xi}(\zeta) = \delta(\xi,\zeta)$ 
for all $\zeta\in X$, and have the representation $\chi_{\xi} =\sum_{\eta\in X} A_{\eta,\xi} k(\cdot,\eta)$. 
The $\chi_\xi$'s are the \emph{Lagrange functions} associated with $X$; 
they are easily seen to form a basis for $S(X)$.

The $A_{\eta,\xi}$'s can be expressed in a useful way, in terms of an inner product. 
Let $\langle \cdot, \cdot\rangle_{\calN(k)}$ denote the inner product for the reproducing Hilbert space 
$\calN(k)$. 
Because $\chi_\xi\in \calN(k)$, we have that 
$\langle \chi_{\xi} , \kappa_m(\cdot,\eta) \rangle_{\calN(k)}= \chi_\xi(\eta)$. 
Representing a second  $\chi_\zeta$ by $\chi_{\zeta} =\sum_{\eta\in X}  A_{\eta,\zeta} k(\cdot,\eta)$, 
we obtain
\begin{equation}\label{inner_product_coefficients}
\big\langle \chi_{\xi} , \chi_{\zeta} \big\rangle_{\calN(k)}
=\big\langle \chi_{\xi} ,  \sum_{\eta\in X}A_{\eta,\zeta} k(\cdot,\eta) \big\rangle_{\calN(k)} 
= \sum_{\eta \in X}A_{\eta,\zeta}\chi_{\xi}(\eta)  = A_{\xi,\zeta}.
\end{equation}
If $k=\kappa_m:\bbM\times \bbM \to \RR$ is a Sobolev-Mat\'ern kernel, 
then, by virtue of $\kappa_m$ being a reproducing kernel for $\calN(\kappa_m)\approx W_2^m(\bbM)$, 
we can make the following ``bump estimate'' on the $A_{\eta,\xi}$'s . 
Consider a $C^\infty$ function $\psi_{\xi,q}:\bbM\to [0,1]$ 
that is compactly supported in $B(\xi,q)$ and that
 satisfies $\psi_{\xi,q}(\xi)=1$. 
 Moreover, the condition on  $\mathrm{supp}(\psi_{\xi,q})$ implies that, 
 for any $\zeta\in X$, $\psi_{\xi,q}(\zeta)=\delta(\xi,\zeta)$. 
 Because $\psi_{\xi,q}\in W_2^m\approx \calN(\kappa_m)$ and $\chi_{\xi}$ 
 is the minimum norm interpolant 
 to $\zeta\to \delta(\xi,\zeta)$, we have that 
$$
\|\chi_{\xi}\|_{\calN(\kappa_m)}
\le
\|\psi_{\xi,q}\|_{\calN(\kappa_m)}
\le
C \|\psi_{\xi,q}\|_{W_2^{m}(\bbM)}
 \le Cq^{\frac{d}2-m}.
 $$
As a consequence, the $A_{\xi,\zeta}$'s are uniformly bounded: 
\begin{equation}\label{basiccoeffs}
|A_{\xi,\zeta}| = |\langle \chi_{\xi},\chi_{\zeta}\rangle_{\calN(\kappa_m)}|\le C q^{d-2m}.
\end{equation}
This bound is rather rough, and can be substantially improved. 
In fact, when $X$ is sufficiently dense in $\bbM$, there exist constants $C$, and $\nu>0$, 
which depend on $\kappa_m$ (see\cite{FHNWW}),
so that the coefficient bound
\begin{equation}\label{coeff}
|A_{\xi,\zeta}| 
= 
|\langle \chi_{\xi},\chi_{\zeta}\rangle_{\calN(\kappa_m)}|
\le 
C q^{d-2m} \mathrm{exp}\left(-{\nu} \frac{{\mathrm{dist}}(\xi,\zeta)}{h}\right)
\end{equation}
holds. The proof of this estimate is, \emph{mutatis mutandis}, 
the same as that for \cite[Eqn.~5.6]{FHNWW}.

Under the same hypotheses,
 we have the   spatial decay of the Lagrange function:
\begin{equation}
|\chi_{\xi}(x)| \le C \rho^{m-d/2} \mathrm{exp}\left(-\mu \frac{{\mathrm{dist}}(x,\xi)}{h}\right), \label{ptwise}
\end{equation}
with
$\mu = 2\nu $.
 Both (\ref{ptwise}) and (\ref{coeff}) are consequences of
the zeros estimate \cite[(A.15)]{HNW-p} on $\bbM$
and a more basic estimate,
\begin{equation}
\|\chi_{\xi} \|_{W_2^m(\bbM \setminus B(\xi,R))} 
\le 
C q^{d/2 - m} \mathrm{exp}\left(-\mu \frac{R}{h}\right)\label{energy}
\end{equation}
which we call an energy estimate.
When (\ref{energy}) holds for a system of Lagrange functions, 
we say it exhibits {\em stationary exponential decay of order $m$}
 Stationary decay of order $m$ was demonstrated for Lagrange functions generated by
  Sobolev-Mat{\' e}rn kernels on compact Riemannian manifolds in \cite{HNW}. 
 (Specifically, these results are found in \cite[Corollary 4.4]{HNW} for (\ref{energy}) and 
 in \cite[Proposition 4.5]{HNW} for (\ref{ptwise}).) 
Similar bounds hold for Lagrange functions associated with other kernels, 
both positive definite and conditionally positive definite, as discussed in \cite{HNW-p} and \cite{HNRW2}.

We stress
that
to get estimates (\ref{energy}), (\ref{ptwise}) and (\ref{coeff}), 
the point set $X$ must be dense in $\bbM$. 
This is clearly problematic when we consider behavior over $\Omega\subsetneq \bbM$ 
and $X\subset\Omega$ 
(which is a focus of this article).
To handle this, for a given point set we require the dense, quasi-uniform extension to 
$\mathbb M$ that was developed in  Lemma~\ref{Extension}.

%%%%%%%%%%%%%%%%%%%%%%%%%%%%%
\subsection{Bernstein type estimates for (full) Lagrange functions}\label{full_bernstein}

We develop partial Bernstein inequalities for functions %of the form 
$s = \sum_{\xi\in X}a_\xi \chi_{\xi}$.
Our goal is to control Sobolev norms
$\|s\|_{W_2^{\sigma}}$ by the $\ell_2(X)$ norm on the coefficients: $\|\bsa\|_{\ell_2(X)}$.
We have the following theorem.
\begin{theorem}\label{synthesis_thm} 
If $X$ is sufficiently dense in $\Omega$ and $0\le \sigma \le m$, then there exists $C<\infty\ $ such that 
\begin{equation}\label{Synthesis}
\big\|\sum_{\xi\in X} a_{\xi} \chi_{\xi}\big\|_{W_2^{\sigma}(\Omega)} 
\le 
C \rho^{m} h^{d/2 -\sigma } \big\|\bsa \big\|_{\ell_2(X)}.
\end{equation} 
\end{theorem}
\begin{proof}
Since $\Omega\subseteq \bbM$ and $W_2^{\sigma}(\Omega) \subseteq W_2^{\sigma}(\mathbb M) $, 
we only need to prove the result for $\Omega=\mathbb M$. In addition, we can replace $X$ with 
$\widetilde X$, the extension of $X$ to $\mathbb M$, whose existence was shown in 
Lemma~\ref{Extension}. The point is that once the result is shown true for $\widetilde X$, 
we just restrict $a_\xi$'s to $\xi \in X$, setting $a_\xi=0$ for $\xi \in \widetilde X \setminus X$. 

To begin, we use (\ref{coeff}) to observe that $\chi_{\xi} \in W_2^m(\bbM)$, whence we obtain
\begin{eqnarray*}
\big\| \sum_{\xi\in \widetilde X} a_{\xi} \chi_{\xi}\big\|_{W_2^m(\bbM)}^2
&\le&
C \big\| \sum_{\xi\in \widetilde X} a_{\xi} \chi_{\xi}\big\|_{\calN(\kappa_m)}^2\\
&=&
C \sum_{\xi\in \widetilde X} 
\sum_{\zeta\in \widetilde X}|a_{\xi}||a_{\zeta}| 
\bigl|\langle  \chi_{\xi},\chi_{\zeta} \rangle_{\calN(\kappa_m)}\bigr|\\
&\le&
C q^{d-2m}\sum_{\xi\in \widetilde X} 
\sum_{\zeta\in \widetilde X}|a_{\xi}||a_{\zeta}| e^{-\nu \frac{{\mathrm{dist}}(\xi,\zeta)}{h}}
\end{eqnarray*}
We can split this into diagonal and off-diagonal parts, leaving
$$
\big\| \sum_{\xi\in \widetilde X} a_{\xi} \chi_{\xi}\big\|_{W_2^m(\bbM)}^2
\le
C q^{d-2m}
\bigg(\sum_{\xi\in \widetilde X}|a_{\xi}|^2+
\sum_{\xi\in \widetilde X} \sum_{\ \zeta\in \widetilde X,\zeta\ne \xi}|a_{\xi}||a_{\zeta}| 
e^{-\nu \frac{{\mathrm{dist}}(\xi,\zeta)}{h}}\bigg).
$$

From this we have
$
\|
\sum_{\xi\in \widetilde X} a_{\xi} \chi_{\xi}\|_{W_2^m(\bbM)}
\le
C q^{d/2 - m} \left(\| \bsa\|_{\ell_2(\widetilde X)} + (II)^{1/2}\right)
$.
We focus on the off-diagonal part $II$. Since each term appears twice,  we can make the estimate
\begin{eqnarray*}
\sum_{\xi\in \widetilde X}\sum_{\zeta\ne \xi}
|a_{\xi}||a_{\zeta}|e^{-\nu \frac{{\mathrm{dist}}(\xi,\zeta)}{h}} 
&\le& 
\sum_{\xi\in \widetilde X}\sum_{\ \zeta\in \widetilde X,\zeta\ne \xi}
|a_{\xi}|^2 e^{-\nu \frac{{\mathrm{dist}}(\xi,\zeta)}{h}} \\
&\le& 
C \rho^d\left( \int_0^\infty e^{-\nu r} r^{d-1} \rmd r\right) \sum_{ \xi\in \widetilde X}|a_{\xi}|^2  .
\end{eqnarray*}
The first inequality uses the estimate $|a_{\xi}||a_{\zeta}| \le\frac12 ( |a_{\xi}|^2 + |a_{\zeta}|^2)$.
The second inequality follows from (\ref{decreasing}). We have demonstrated that
\begin{equation}\label{Synthesis_Sobolev}
\big\|\sum_{\xi\in \widetilde X} a_{\xi} \chi_{\xi}\big\|_{W_2^m(\bbM)} 
\le 
C \rho^{d/2} q^{d/2 -m} \|\bsa \|_{\ell_2(\widetilde X)}
\le 
C \rho^{m} h^{d/2 -m} \left\|\bsa \right\|_{\ell_2(\widetilde X)}. 
\end{equation}

On the other hand, using (\ref{ptwise}) we have
\begin{eqnarray*}
\big\|\sum_{\xi\in \widetilde X} a_{\xi} \chi_{\xi}\big\|_{L_2(\bbM)}^2 
&\le&  
\sum_{\xi\in \widetilde X} 
  \sum_{\zeta\ne \xi}
      |a_{\xi}||a_{\zeta}| |\langle  \chi_{\xi},\chi_{\zeta} \rangle_2|\\
&\le& 
C \rho^{2m-d} \sum_{\xi\in \widetilde X} 
  \sum_{\substack{ \zeta\in \widetilde X\\ \zeta\ne \xi}}
      |a_{\xi}||a_{\zeta}| 
          \int_{\bbM} 
              e^{-2\nu \frac{{\mathrm{dist}}(x,\xi)}{h}} e^{-2\nu \frac{{\mathrm{dist}}(x,\zeta)}{h}}
          \rmd x .
\end{eqnarray*}
The integral can be estimated over two disjoint regions 
(the part of $\bbM$ closer to $\xi$ and the part closer to $\zeta$) 
to obtain
\begin{eqnarray*}
\big\|\sum_{\xi\in \widetilde X} a_{\xi} \chi_{\xi}\big\|_{L_2(\mathbb M)}^2
&\le& 
C\rho^{2m-d} h^d 
  \sum_{\xi\in \widetilde X} 
  \sum_{\ \zeta\in \widetilde X,\zeta\ne \xi}
      |a_{\xi}||a_{\zeta}| e^{- {\nu} \frac{{\mathrm{dist}}(\xi,\zeta)}{h}}\\
&\le & 
C\rho^{2m}  h^d \sum_{ \xi\in \widetilde X}|a_{\xi}|^2 .
\end{eqnarray*}
The second inequality repeats the estimate used on 
$\big\|\sum_{\xi\in \widetilde X} a_{\xi} \chi_{\xi}\big\|_{W_2^m(\bbM)}$.
It follows that 
\begin{equation}\label{Synthesis_L_2}
\big\|\sum_{\xi\in \widetilde X} a_{\xi} \chi_{\xi}\big\|_{L_2(\bbM)} 
\le 
C \rho^{m} h^{d/2 } \big\|\bsa \big\|_{\ell_2(\widetilde X)}.
\end{equation}

Define the operator 
$V:\ell_2(\widetilde X)\to W_2^m(\bbM):\bsa \mapsto \sum_{\xi\in \widetilde X}a_{\xi} \chi_{\xi}$. 
We interpolate between (\ref{Synthesis_Sobolev}) and
(\ref{Synthesis_L_2}), using the fact that 
$W_2^{\sigma}(\bbM) = B_{2,2}^{\sigma}(\bbM) = [L_2(\bbM),W_2^m(\bbM)]_{\tfrac{\sigma}{m},2}$ 
(cf. \cite{triebel1992}). 
As noted at the start, this implies the result for $\Omega \subseteq \mathbb M$ and $X\subset \Omega$.
\end{proof}
%

%%%%%%%%%%%%%%%%%%%%
\section{Local Lagrange Functions}\label{local_lagrange}
We now consider locally constructed basis functions. 
We employ a small set of centers from $X$ to construct ``local'' Lagrange functions: 
For each $\xi\in X$, we define 
\[
\Upsilon(\xi) := \{\zeta\in X\mid{\mathrm{dist}}(\zeta,\xi)\le Kh |\log h|\}, 
\]
where $K>0$ is a parameter used to adjust the number of points in $\Upsilon(\xi)$.

We define the \emph{local Lagrange function} $b_\xi$ at $\xi$ to be the Lagrange function for 
$\Upsilon(\xi)$.  
We will call $\Upsilon(\xi)$ the {\em footprint} of $b_\xi$. 
Of course, $b_\xi\in S(\Upsilon(\xi))$. The choice of the parameter $K$ depends on the 
constants appearing in  the  stationary exponential decay (\ref{energy}), the conditions we place on
the manifold $\bbM$ and the rate at which we wish $b_{\xi}$ to have decay away from $\xi$.

$K$ may be chosen so that for a prescribed $J$, which depends linearly on $K$ and other parameters, 
we can ensure that  $\|\chi_{\xi} - b_{\xi}\|_{L_\infty(\mathbb M)} =\mathcal{O}(h^J)$ holds. 
(See \eqref{Sob_loc_full_error}.)

The main goal of this section is to provide Sobolev estimates 
on the difference between locally constructed 
functions $b_{\xi}$ and the analogous (full Lagrange) functions $\chi_{\xi}$. 
As in \cite{FHNWW} the analysis of this new basis is considered in two steps. 
First, an intermediate basis function
${\widetilde \chi}_{\xi}$ is constructed and studied: 
the {\em truncated Lagrange function}. 
 These functions employ the same footprint as $b_{\xi}$ 
 (i.e., they are members of $S(\Upsilon(\xi))$)
but their construction is global rather than local. 
This topic  is considered in Section \ref{SS:TLF}. 
Then, a comparison is made between the truncated Lagrange function and the local Lagrange function. 
In Section~\ref{SS:LLF}, 
we will show that the error between local and truncated Lagrange functions is controlled 
by the size of the  coefficients  
in the expansion of
 $b_{\xi} -{\widetilde \chi}_{\xi}$ in the standard (kernel) basis for $S(\Upsilon(\xi))$.

%%%%%%%%%%%
\subsection{Truncated Lagrange functions}\label{SS:TLF}

For a (full) Lagrange function 
$\chi_{\xi} = \sum_{\zeta\in X} A_{\xi,\zeta} k(\cdot, \zeta)  \in S( X)$
on the point set $ X$, the truncated Lagrange function 
${\widetilde \chi}_\xi =  \sum_{\zeta\in\Upsilon(\xi)} A_{\xi,\zeta} k(\cdot, \zeta)$ 
is a function in $S(\Upsilon(\xi))$ obtained by removing the $A_{\xi,\zeta}$'s for $\zeta$ 
not in $\Upsilon(\xi)$. 
The cost of truncating can be measured using the norm of the omitted coefficients (the tail).

\begin{lemma} \label{truncated_sum}
Let $\bbM$ be as in Section~\ref{SS:manifolds} 
and let $\kappa_m$ be a Sobolev-Mat\'ern kernel generating  
$\{\chi_\xi: \xi\in X\}$. 
Suppose $ X\subset \bbM$ has fill distance $0<h\le h_0$ and separation radius $q>0$. 
Let $K> (4m-2d)/\nu$ and for each $\xi\in X$, 
let $\Upsilon(\xi) = \{\zeta\in  X\mid {\mathrm{dist}}(\xi,\zeta) \le K h|\log h|\}$. 
Then
$$
\sum_{\zeta\in  X\setminus \Upsilon(\xi)} |A_{\xi,\zeta}|
\le 
C \rho^{2m} h^{K\nu/2 +d-2m}.
$$
\end{lemma}
\begin{proof}
The inequality (\ref{coeff}) guarantees  that 
\begin{eqnarray*}
\sum_{\zeta \in  X \setminus \Upsilon(\xi)} |A_{\xi,\zeta}|&\le& 
Cq^{d-2m} 
\sum_{{\mathrm{dist}}(\zeta,\xi)\ge Kh|\log h|} \exp\left(-\nu\frac{{\mathrm{dist}}(\xi,\zeta)}{h} \right)\\
&\le& 
Cq^{-2m}
\int_{y\in \bbM \setminus B(\xi,Kh|\log h|)}
 \exp\left(-\nu\frac{{\mathrm{dist}}(\xi,y)}{h} \right)\rmd y\\
&\le& 
Cq^{-2m}
\int_{Kh|\log h|}^{\infty}
 \exp\left(-\nu\frac{r}{h} \right)r^{d-1}\rmd r.
\end{eqnarray*}
A simple way\footnote{The integral can be done exactly. 
However, we don't need to do that here.} 
to estimate this involves splitting $\nu = \nu/2 +\nu/2$ and writing
\begin{multline*}
\sum_{\zeta \in  X \setminus \Upsilon(\xi)} |A_{\xi,\zeta}|\\
\le
C h^d q^{-2m}
\left(\int_{K|\log h|}^{\infty} r^{d-1}
 \exp\left(- K |\log h| \frac{\nu}{2} \right) 
 \exp\left(- r \frac{\nu}{2} \right)\rmd r\right)\\
 \le
 C h^d q^{-2m}
 h^{K\nu/2}.
\end{multline*}
The lemma follows.
\end{proof}

Standard properties of reproducing Hilbert kernels imply that, 
because $\mathbb M$ is a compact metric space, $\kappa_m(x,y)$ 
is continuous on $\mathbb M \times \mathbb M$. 
Consequently, $\kappa_m(x,x) = \|\kappa_m(\cdot,x)\|_{\mathcal N(\kappa_m)}^2$ 
is uniformly bounded in $x$. 
Moreover, since $\mathcal N(\kappa_m)$ and $W_2^m(\mathbb M)$ are norm equivalent, 
there is a constant $\Gamma$ such  that 
\[
\sup_{x\in \mathbb M}\|\kappa_m(\cdot,x)\|_{W_2^m(\mathbb M)} 
\le C\sup_{x\in \mathbb M}\|\kappa_m(\cdot,x)\|_{\calN(\kappa_m)}
\le \Gamma_{\kappa_m}.
\] 
From Lemma~\ref{truncated_sum} and the inequality above, we have that
\begin{equation}\label{general_norm}
\|\chi_{\xi} - {\widetilde \chi}_{\xi}\|_{W_2^m(\mathbb M)}
\le
\Gamma_{\kappa_m} \sum_{\zeta \in  X \setminus \Upsilon(\xi)} |A_{\xi,\zeta}|  
\le 
C \Gamma_{\kappa_m}  
 \rho^{2m}h^{K\nu/2-2m+d}.
\end{equation}
 Applying the Sobolev embedding theorem then yields the result below.
  \begin{proposition} \label{SobMat}
  Let $\kappa_m$ the  Sobolev-Mat{\' e}rn kernel , with $m>d/2$. Then, if $1\le p<\infty$ and 
  $\sigma \le m- (\frac{d}{2} - \frac{d}{p})_+$,   
  or if $p=\infty$ and $0\le \sigma < m-d/2$,
  we have 
 \begin{equation} \label{p_sigma__bnd}
\|\chi_{\xi} - {\widetilde \chi}_{\xi} \|_{W_p^\sigma(\bbM)} 
 \le 
C\Gamma_{\kappa_m}\rho^{2m}h^{K\nu/2 +d -2m}, \quad C=C_{\sigma,m,p}.
\end{equation}
 In particular, if $p=\infty$ and $\sigma=0$, we have
 \begin{equation} \label{infty_bnd}
 \|\chi_{\xi} - {\widetilde \chi}_{\xi} \|_{L_\infty(\mathbb M)} 
 \le C_m\Gamma_{\kappa_m}\rho^{2m}h^{K\nu/2 +d -2m}.
 \end{equation}
\end{proposition}
\begin{proof}
This follows from (\ref{general_norm}) by applying the Sobolev embedding theorem to 
$\|\chi_{\xi} - {\widetilde \chi}_{\xi} \|_{W_p^\sigma(\bbM)}$. 
\end{proof}

%%%%%%%
\subsection{Local Lagrange Function Distance Estimates}\label{SS:LLF}
In this section, we consider bounding the distance between $b_{\xi}$ and $\chi_{\xi}$ and also 
$b_{\xi}$ and $\widetilde \chi_{\xi}$, using Sobolev norms. 
The argument we will use is essentially the one used on the sphere in \cite{FHNWW}.

By construction, both $b_\xi$ and ${\widetilde \chi}_{\xi}$ are in $S(\Upsilon(\xi))$, and thus 
$b_{\xi} - {\widetilde \chi}_{\xi} \in S(\Upsilon(\xi))$ is, too. 
Hence, $b_{\xi}- {\widetilde \chi}_{\xi} = \sum_{\zeta\in \Upsilon(\xi)} a_{\zeta} \kappa_m(\cdot,\zeta)$. 
Let $\bsa:=(a_\zeta)_{\zeta \in \Upsilon(\xi)}$ and $\bsy=(b_\xi-{\widetilde \chi}_{\xi})|_{\Upsilon(\xi)}$ 
where $\bsa$ and $\bsy$ are related by $\rmK_{\Upsilon(\xi)} \bsa = \bsy$. 

We can write $\bsy$ another way. 
Since $b_\xi$ is a Lagrange function for $\Upsilon(\xi)$, 
we have that $b_\xi(\zeta) = \delta_{\xi,\zeta}$ when $\zeta\in \Upsilon(\xi)$. 
However, because $\chi_\xi$ is a Lagrange function for all $X$, it also satisfies 
$\chi_\xi(\zeta) = \delta_{\xi,\zeta}$, $\zeta\in \Upsilon(\xi)$. 
Consequently,  $\bsy=(\chi_\xi-{\widetilde \chi}_{\xi})|_{\Upsilon(\xi)}$. 

Using this form of $\bsy$ we have that 
$\|\bsy\|_1
\le 
(\# \Upsilon(\xi)) \|\bsy\|_\infty\le (\# \Upsilon(\xi)) \| \chi_{\xi} - {\widetilde \chi}_{\xi}\|_{L_\infty(\bbM)}
$. 
From \eqref{infty_bnd} and the bound $\# \Upsilon(\xi)\le C \rho^d |\log h|^d$, we arrive at 
\[
\|\bsy\|_1\le C\rho^{2m+d}h^{K\nu/2 +d -2m}  |\log h|^d.
\]

The matrix $(\rmK_{\Upsilon(\xi)})^{-1}$ has entries 
$(A_{\zeta,\eta} )_{\zeta,\eta\in\Upsilon(\xi)}$. \ 
These  can be estimated by (\ref{basiccoeffs}): 
 $|A_{\zeta,\eta}| \le  Cq^{d-2m}$. \ 
 It follows that $(\rmK_{\Upsilon(\xi)})^{-1} $ has $\ell_1$ matrix norm 
 $$
 \big\|\left(\rmK_{\Upsilon(\xi)}\right)^{-1}\big\|_{1\to 1} 
 \le 
 C (\# \Upsilon(\xi)) q^{d-2m}
 \le 
 C  \rho^{2m} |\log h|^d h^{d-2m}.
 $$
 This and the bound on $\| \bsy\|_1$ above imply that
\begin{equation} \label{norm_a_bnd}
\|\bsa\|_1
\le 
\big\|\left(\rmK_{\Upsilon(\xi)}\right)^{-1}\big\|_{1\to 1}
\|\bsy\|_1
\le
 C  \rho^{4m+d} |\log h|^{2d} h^{K\nu/2 +2d -4m}. 
\end{equation}
 
Under the conditions in Proposition~\ref{SobMat}, $b_\xi - \widetilde \chi_\xi$ is in 
$W_p^\sigma(\bbM)$, as is each $\kappa_m(\cdot,\zeta)$. 
Consequently, 
$
\|b_{\xi}- {\widetilde \chi}_{\xi}\|_{W_p^\sigma(\bbM)} 
\le 
\|\bsa\|_1 \max_{z \in \bbM} \|\kappa_m(\cdot,z)\|_{W_p^\sigma(\bbM)}\le \Gamma_{\kappa_m}
 \| \bsa\|_1
 $. 
Using the triangle inequality, the bound in \eqref{norm_a_bnd}, 
and the estimate above, we have the following result:
\begin{lemma}\label{compact_error}
Let $\bbM$ be as in Section~\ref{SS:manifolds} and let $\kappa_m$ be a Sobolev-Mat\'ern kernel. 
Then, we have,  
for $0\le \sigma \le m -(d/2-d/p)_+$,  or with $p=\infty$ and $0\le \sigma < m-d/2$,
\begin{equation}
\left\|b_{\xi} -\chi_{\xi}\right \|_{W_p^{\sigma}(\bbM)}
\le
C\Gamma_{\kappa_m}  \rho^{4m+d}h^{K\nu/2 +2d -4m}|\log h|^{2d}, \ C=C_{m,p,\sigma}
\end{equation}
\end{lemma}

We remark that $|\log h|^{2d} \le C h^{-1}$, so that either by finding a sufficiently small $h^*$, 
so that this holds for $h<h^*$, or by increasing the constant, or both we have
\begin{equation}\label{Sob_loc_full_error}
\left\|b_{\xi} -\chi_{\xi}\right \|_{W_p^{\sigma}(\bbM)}
\le C  
\rho^{4m+d}h^{K\nu/2 +2d -4m-1}.
\end{equation}

%%%%%%%%%%%%%%%%%
\subsection{Bernstein %type 
estimate for local Lagrange functions}\label{local_bernstein}

In this section we discuss the  local Lagrange functions $b_\xi$ generated by $\kappa_m$ 
and the centers $ X$.
We develop partial Bernstein inequalities, 
where for functions of the form  $s= \sum_{\xi\in  X}a_\xi b_{\xi}$ the norms 
$\|s\|_{W_2^{\sigma}}$ are controlled by an $\ell_2$ norm on the coefficients: $\|\bsa\|_{\ell_2( X)}$.

We will now obtain estimates similar to (\ref{Synthesis}) for the expansion 
$\sum_{\xi\in  X} a_{\xi} b_{\xi}$. 
In contrast to the full Lagrange basis, which is globally decaying, 
we have a family of functions $(b_{\xi})_{\xi\in X}$ 
whose members are uniformly small (on compact sets), 
but do not necessarily decay (at least not in a stationary way).

\begin{theorem}\label{main_local_bernstein}
Suppose  $X$ is sufficiently dense in $\Omega$. 
Assume $K\nu +d -4m -1 \ge d/2 - \sigma$. 
Then there is $C$, depending on the constants appearing in \eqref{balls} 
 and \eqref{ptwise} 
 so that
\begin{equation}\label{first_inverse_estimate}
\big\|\sum_{\xi\in  X} a_{\xi} b_{\xi}\big\|_{W_2^{\sigma}(\Omega)} 
\le 
C_\bbM \rho^{4m+2d} h^{d/2 - \sigma} \|\bsa\|_{\ell_2( X)}.
\end{equation}
\end{theorem}
\begin{proof}
As in the case of Theorem~\ref{synthesis_thm}, because $\Omega \subseteq \bbM$, 
we only have to prove the result for $\bbM$.  
We start with the basic splitting 
$$
s := \sum_{\xi\in  X} a_{\xi} b_{\xi} 
=\big(  \sum_{\xi\in  X} a_{\xi} \chi_{\xi}\big)+\big(\sum_{\xi\in  X} a_{\xi}( b_{\xi}-\chi_{\xi} )\big) 
=: G+B .$$ 
Applying the Sobolev norm gives
$\|s\|_{W_2^{\sigma}(\bbM)}^2 
\le 
\|G\|_{W_2^{\sigma}(\bbM)}^2 +\|B\|_{W_2^{\sigma}(\bbM)}^2$. 
From (\ref{Synthesis}), we have 
$\|G\|_{W_2^{\sigma}(\bbM)} \le C\rho^{m} h^{d/2-\sigma} \|\bsa\|_{\ell_2( X)}$.

We now restrict our focus to $B$. 
For $|\alpha|\le m$,
H{\"o}lder's inequality ensures that
$
\| \sum_{\xi\in  X} a_{\xi} \nabla^{\alpha} ( b_{\xi}-\chi_{\xi} ) \|_x
\le
\big(\sum_{\xi\in  X} |a_{\xi}|^2 \big)^{1/2}  
\big(\sum_{\xi\in  X} \| \nabla^{\alpha}(b_{\xi}-\chi_{\xi})\|_x^2 \big)^{1/2}
$
holds.
Here we have used, 
for a rank $\alpha$-covariant  tensor field $F$ 
(i.e., a smooth section of the vector bundle of rank $\alpha$ covariant tensors),  
the norm on the fiber at $x$ given by the Riemannian metric, i.e., 
$\|F\|_x$ is the norm of the tensor $F(x)$. 

Therefore, for $0\le \sigma\le m$,
\begin{eqnarray*}
\|B\|_{W_2^{\sigma}(\bbM)}
&\le& 
\|\bsa\|_{\ell_2( X)}\big\|  \sum_{\xi\in X} (b_{\xi}-\chi_{\xi})\big\|_{W_2^{\sigma}(\bbM)}\\
&\le& 
\|\bsa\|_{\ell_2( X)}  \sum_{\xi\in X}\left\|(b_{\xi}-\chi_{\xi})\right\|_{W_2^{\sigma}(\bbM)}\\
&\le&
\|\bsa\|_{\ell_2( X)} 
(\#X)
\max_{\xi\in X}\left\|(b_{\xi}-\chi_{\xi})\right\|_{W_2^{\sigma}(\bbM)}
\end{eqnarray*}
The inequality $\|B\|_{W_2^{\sigma}(\bbM)} \le C \rho^{4m+2d} h^{K\nu/2 +d -4m-1} \|\bsa\|_{\ell_2( X)}$
follows by 
applying Lemma \ref{compact_error}, and the fact that $\#X\le C \rho^d h^{-d}$.
Inequality
\eqref{first_inverse_estimate}
follows, which completes the proof.  
\end{proof}
%

%%%%%%%%%%%%%%%%%%%%%%%%%%%%%
\section{Stability Results and Inverse Inequalities}\label{Cond}
In this section we consider finite dimensional spaces 
$V_{\Xi} = \mathrm{span}_{\xi \in \Xi} \chi_{\xi}$ 
and ${\widetilde V}_{\Xi} = \mathrm{span}_{\xi \in \Xi} b_{\xi}$, 
using the Lagrange and local Lagrange functions considered in 
Sections \ref{full_bernstein} and \ref{local_bernstein}.
We note that the localized functions $\chi_{\xi}$ and $b_{\xi}$ 
are indexed by a dense set of centers $X\subset \bbM$,
but the spaces $V_{\Xi}$ and ${\widetilde V}_{\Xi}$ are constructed using 
a restricted set of centers $\Xi =  X\cap \Omega$, 
corresponding to the centers located inside $\Omega\subset \bbM$, 
which the underlying region over which we take the $L_2$ norm.

%%%%%%%
\subsection{The domain $\Omega$ }\label{bounded_domain}
We now consider a compact region $\Omega \subset \bbM$. 
This presents two challenges. 

The first concerns the density of point sets $\Xi \subset \Omega$. 
Unless $\Omega = \bbM$, the given set $\Xi$ does not itself satisfy the density condition 
$h(\Xi,\bbM)<h_0$. 
For this, we need a larger set $X\subset \bbM$ with points lying outside of $\Omega$ 
(in fact, when working with local Lagrange functions $b_{\xi}$, it suffices to consider 
$X\subset \{x\in \bbM\mid {\mathrm{dist}}(x,\Omega)<K h |\log h|\}$).
This assumption is in place to guarantee decay of the basis functions.
It would be quite reasonable to be ``given'' initially only the set $\Xi\subset \Omega$  
and to use this to construct $X$. Lemma \ref{Extension} demonstrates that it is possible to extend a 
given set of centers $X\subset \Omega$  in a controlled way to obtain a dense subset of $\bbM$.

The second challenge concerns the domain $\Omega$.
Previously we have not needed to make extra assumptions
about such a region, but for estimates relating $\|\bsa \|_{\ell_2}$ and  
$\|\sum_{\xi} a_{\xi} b_\xi\|_{L_2(\mathbb M)}$ or $\|\sum_{\xi} a_{\xi} \chi_\xi\|_{L_2(\mathbb M)}$, 
the boundary becomes slightly more important.
Fortunately, the extra assumption we make on $\Omega$ is quite mild -- 
it is given below in Assumption \ref{Man}.

For the remainder of the article, we assume 
$\Omega\subset \bbM$ satisfies the Boundary Regularity condition and
$\Xi\subset \Omega$ is finite.  
We utilize the extended point set ${\widetilde \Xi}$ from 
Lemma \ref{Extension}; 
this gives rise to the family $(\chi_{\xi})_{\xi\in {\widetilde \Xi}}$.
With this setup, we define 
$$
V_{\Xi} 
:= 
\mathrm{span}_{\xi\in\Xi} \chi_{\xi}\ (\text{\rm Full Lagrange})
\ 
\text{and}
\ 
{\widetilde V}_{\Xi} := \mathrm{span}_{\xi\in\Xi} b_{\xi}\ (\text{Local Lagrange}).
$$
Note that $V_{\Xi} \subset S({\widetilde \Xi})$, while 
${\widetilde V}_{\Xi} \subset S({\widetilde \Xi} \cap \{x\in \bbM\mid {\mathrm{dist}}(x,\Omega) 
\le 
K h|\log h|\}) \subset S({\widetilde \Xi}).$ 
A property of $\bbM$, in force throughout the article, is the following.
\begin{assumption}[Boundary Regularity]\label{Man} 
There exists a constant 
$0<\alpha_{\Omega}$ for which the 
following holds:
for all $x\in \Omega$  and all $r\le \rmd_\bbM$ , 
$$\alpha_{\Omega} r^d \le \mathrm{vol}(B(x,r)\cap \Omega).$$
\end{assumption}
Note that this holds when $\Omega$ satisfies an interior cone condition.
%

%%%%%%%%%%%%%%%%%%%%
\subsection{Stability %of full and local Lagrange functions 
on $\Omega$}\label{total_stability}
In this section we show that the synthesis operators $\bsa\mapsto \sum_{\xi\in\Xi} a_{\xi} \chi_{\xi}$
and $\bsa\mapsto \sum_{\xi\in\Xi} a_{\xi} b_{\xi}$ are bounded above and below from $\ell_p(\Xi)$ to $L_p(\Omega)$.

In addition to the pointwise and coefficient decay (namely  (\ref{ptwise}) and (\ref{coeff})) 
stemming from (\ref{energy}), 
we can employ the following uniform equicontinuity property of the Lagrange functions.
There is $0<\epsilon \le 1$ so  that 
\begin{equation}\label{equicontinuity} 
| \chi_{\xi}(x) - \chi_{\xi}(y)|
\le 
C \left[\frac{{\mathrm{dist}}(x,y)}{q}\right]^{\epsilon} 
\end{equation}
with constant $C$ depending only on $\epsilon$,  the mesh ratio $\rho = h/q$, 
and the constants in (\ref{energy}).
This follows from the energy estimate (\ref{energy})
and a zeros estimate \cite[Corollary A.15]{HNW-p},
and the embedding $C^{\epsilon}(\bbM) \subset W_2^m(\bbM)$ where $0<\epsilon< m-d/2$.
We refer the interested reader to  \cite[Lemma 7.2]{HNSW} for details.
%

%%%%%%%%%%%%%%%%%%%%%%%
%
%:Lower Comparison
%
%%%%%%%%%%%%%%%%%%%%%%%
\begin{proposition}\label{lowercomparison} 
Let $\Omega\subseteq\bbM$ be a compact domain satisfying Assumption \ref{Man}. 
Then for the Lagrange functions corresponding to $\kappa_m$, 
there exist constants $c,C>0$ and $q_0>0$, so that for $q<q_0$, 
for $1\le p\le \infty$ and  for all functions in $V_{\Xi}$,
\begin{equation}\label{low_comp_eqn}
c \left\| \bsa  \right \|_{\ell_p(\Xi)}       
\le  
q^{-d/p} \|\sum_{\xi\in\Xi}a_{\xi}\chi_{\xi}\|_{L_p(\Omega)} \le C\left\| \bsa  \right \|_{\ell_p(\Xi)} .
\end{equation}
If, in addition $K\nu/2+2d -4m-2 =:\varepsilon>0$, with $K$ chosen sufficiently large, then
\begin{equation}\label{local_low_comp_eqn}
\frac{c}{2} \left\| \bsa  \right \|_{\ell_p(\Xi)}       
\le  
q^{-d/p} \|\sum_{\xi\in\Xi}a_{\xi}b_{\xi}\|_{L_p(\Omega)} \le \frac{3C}{2}\left\| \bsa  \right \|_{\ell_p(\Xi)}.
\end{equation}

\end{proposition}
\begin{proof}
We begin with the case in which $\Omega=\mathbb M$ and $s=\sum_{\xi\in\Xi}a_{\xi}\chi_{\xi}\in V_{\Xi}$. Then \eqref{low_comp_eqn} follows directly from \cite[Proposition 3.10]{HNSW}. 
In particular, we note that the boundary regularity assumption guarantees that $\mathbb M$
satisfies \cite[Assumption 2.1]{HNSW}.  
The family of functions $(\chi_{\xi})_{\xi\in\Xi}$ fulfills the three requirements on
$(v_{\xi})_{\xi\in\Xi}$. 
\begin{enumerate}
\item They are Lagrange functions on $\Xi$ (this is \cite[Assumption 3.3]{HNSW}),
\item The decay property given in (\ref{ptwise}) guarantees that \cite[Assumption 3.4]{HNSW} holds 
(with $\mathrm{r}_{\mathbb{M}}  = \mathrm{diam}(\bbM)$,
\item The equicontinuity assumption \cite[Assumption 3.5]{HNSW} is a consequence of 
the H{\" o}lder property (\ref{equicontinuity}).
\end{enumerate}

The case $\Omega \neq \mathbb M$ is more difficult, and the proof too long to be given here. It may be carried out by following the proofs of \cite[Lemma~B.1]{HNRW2} and \cite[Lemma~B.6]{HNRW2}, with appropriate modifications.

To establish \eqref{local_low_comp_eqn}, we begin by using \eqref{Sob_loc_full_error}, with $K\nu/2 +2d -4m-2:=\varepsilon>0$ and $\sigma=0$, to obtain $\|\chi_{\xi}-b_\xi\|_{L_p(\Omega)} \le \|\chi_{\xi}-b_\xi\|_{L_p(\mathbb M)} \le C'\rho^{4m+d}h^
{1+\varepsilon}\!$. \,
From this,  $\sum_{\xi\in\Xi}|a_{\xi}|\le (\#X)^{1-1/p}\|a\|_{\ell_p}\le C'q^{-d(1-1/p)}$, 
and
the triangle inequality, we have that, for $q_0$ sufficiently small, 
\[
q^{-d/p}\|\sum_{\xi\in\Xi}a_{\xi}(\chi_{\xi}-b_\xi)\|_{L_p(\Omega)}
\le C' \rho^{4m+d-\varepsilon-1}q_0^{\varepsilon} \|a\|_{\ell_p} 
\]
Again applying the triangle inequality and employing \eqref{low_comp_eqn}, we arrive at
\begin{gather*}
c(1-C'\rho^{4m+d-\varepsilon-1}q_0^{\varepsilon} ) \left\| \bsa  \right \|_{\ell_p(\Xi)}       
\le  
q^{-d/p} \|\sum_{\xi\in\Xi}a_{\xi}b_{\xi}\|_{L_p(\Omega)} \\ 
\le C(1+C'\rho^{4m+d-\varepsilon-1}q_0^{\varepsilon} )\left\| \bsa  \right \|_{\ell_p(\Xi)}.
\end{gather*}
Next, taking $q_0<1$, and (by increasing $K$ if necessary) 
$q_0^\varepsilon \le \frac{\rho^{-4m-d+\varepsilon+1}}{2C'}$, and using these in the the previous inequality results in \eqref{local_low_comp_eqn}. 
\end{proof}

%%%%%%%%%%%%%%%%%%%%%%%%%%%%
\subsection{Inverse inequalities %for full and local Lagrange functions 
on $\Omega$}\label{main_results}
At this point we can prove the inverse inequality for both full and local Lagrange functions. 
We start with the full Lagrange functions.

\begin{theorem}\label{full_inverse}
Let $\Omega\subseteq\bbM$ be a compact domain satisfying Assumption \ref{Man}. 
Then for the Lagrange functions corresponding to $\kappa_m$,
there exist constants $C>0$ and $h_0>0$, so that for $h<h_0$
if $\Xi\subset \Omega$ has fill distance $h$,  mesh ratio $\rho$,
and ${\widetilde \Xi}\subset \bbM$ is a suitable extension of $\Xi$ 
(for instance, the one given by Lemma \ref{Extension}) 
then $V_{\Xi}\subset W_2^m(\Omega)$ and
for all $s = \sum_{\xi\in\Xi}a_{\xi}\chi_{\xi}\in V_{\Xi}$ and for $0\le \sigma\le m$,
we have
$$
\left\|s \right\|_{W_2^{\sigma}(\Omega)} 
\le   
C \rho^{m+d/2}  h^{-\sigma} \|s\|_{L_2(\Omega)}.
$$
\end{theorem}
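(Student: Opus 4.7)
The plan is simply to chain together the two main quantitative estimates already proved, namely the Bernstein-type synthesis bound in (\ref{Synthesis}) and the lower stability bound in Proposition \ref{lowercomparison}. The essential observation is that these two inequalities see different index sets: the synthesis bound requires a dense point set in $\man$ (hence one must work with the full extension $\wX$), whereas the lower stability bound is intrinsic to $\Xi\subset\Omega$. The extension machinery of Lemma \ref{Extension} is exactly what bridges the two.

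First I would view $s=\sum_{\xi\in\Xi}a_{\xi}\chi_{\xi}$ as an element of $\spam_{\xi\in\wX}\chi_{\xi}$ by extending the coefficient vector by zero: set $\tilde a_{\xi}=a_{\xi}$ for $\xi\in\Xi$ and $\tilde a_{\xi}=0$ for $\xi\in\wX\setminus\Xi$. Trivially $\|\tilde{\bfa}\|_{\ell_2(\wX)}=\|\bfa\|_{\ell_2(\Xi)}$. By Lemma \ref{Extension}, $h(\wX,\man)=h$ and $q(\wX)=\min(q(\Xi),h/2)$, so the mesh ratio of $\wX$ is $h/\min(q,h/2)\le 2\rho$. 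Thus the hypotheses on $X$ underlying (\ref{Synthesis}) (density in $\man$, controlled mesh ratio, stationary exponential decay of the Lagrange basis on $\wX$) are all in force for $h$ small enough.

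Applying (\ref{Synthesis}) to $s=\sum_{\xi\in\wX}\tilde a_{\xi}\chi_{\xi}$ gives
\begin{equation*}
\|s\|_{W_2^{\sigma}(\man)}\le C\,\rho(\wX)^{m}\,h^{d/2-\sigma}\,\|\tilde{\bfa}\|_{\ell_2(\wX)}\le C\,\rho^{m}\,h^{d/2-\sigma}\,\|\bfa\|_{\ell_2(\Xi)}.
\end{equation*}
Proposition \ref{lowercomparison} applied with $p=2$ to the very same $s$ (viewed now as an element of $V_{\Xi}$) yields
\begin{equation*}
\|\bfa\|_{\ell_2(\Xi)}\le C\,q^{-d/2}\,\|s\|_{L_2(\Omega)}.
\end{equation*}
Multiplying these two inequalities and using $q^{-d/2}=\rho^{d/2}h^{-d/2}$ collapses the fill distance factors into $h^{-\sigma}$ and accumulates the mesh ratio into $\rho^{m+d/2}$, yielding the claimed bound.

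The whole argument is short once the extension is in place; accordingly, the only mild subtlety, rather than a genuine obstacle, is to justify that the implicit constants in (\ref{Synthesis}) and in Definition \ref{extra} depend on $\wX$ only through its mesh ratio and fill distance, so that the bounds transfer from $\wX$ back to the original $\Xi$ with only the replacement $\rho(\wX)\lesssim\rho(\Xi)$. Once this is noted, the proof is just the composition of the upper Bernstein bound with the lower $\ell_2$–$L_2$ stability.
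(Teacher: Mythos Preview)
Your proposal is correct and follows exactly the approach the paper intends: combine the synthesis estimate (\ref{Synthesis}) on the extended set $\wX$ with the lower stability bound of Proposition \ref{lowercomparison} on $\Xi\subset\Omega$, converting $q^{-d/2}$ into $\rho^{d/2}h^{-d/2}$. The paper does not spell out a separate proof for Theorem \ref{full_inverse}, treating it as the immediate concatenation of these two results; your write-up, including the zero-extension of coefficients and the observation that $\rho(\wX)\le 2\rho(\Xi)$, fills in precisely the details one would supply.
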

\begin{proof}
From \eqref{Synthesis},  we have 
$\left\|s \right\|_{W_2^{\sigma}(\Omega)}
\le 
C \rho^{m} h^{d/2 -\sigma } \big\|\bsa \big\|_{\ell_2(X)}$, 
and from \eqref{low_comp_eqn}, with $p=2$ and $q=h/\rho$, we have
$c \left\| \bsa  \right \|_{\ell_2(\Xi)}       
\le  
h^{-d/2}\rho^{d/2} \|\sum_{\xi\in\Xi}a_{\xi}\chi_{\xi}\|_{L_p(\Omega)}$. 
Combining the two inequalities completes the proof.
\end{proof}

The proof for the local version is the same, except that we use \eqref{first_inverse_estimate} 
and \eqref{local_low_comp_eqn}.

\begin{theorem}\label{lowercomparison_omega}
Let $\Omega\subset\bbM$ be a compact domain satisfying Assumption \ref{Man}. 
Then for the local Lagrange functions corresponding to $\kappa_m$, 
with $K$ sufficiently large, we have that here exists a constant $h_0>0$, 
so that for $h<h_0$
if $\Xi\subset \Omega$ has fill distance $h$,  mesh ratio $\rho$,
and ${\widetilde \Xi}\subset \bbM$ is a suitable extension of 
$\Xi$ (for instance, the one given by Lemma \ref{Extension}) 
then 
for all $s = \sum_{\xi\in\Xi}a_{\xi}b_{\xi}\in {\widetilde V}_{\Xi}$ 
the following holds for all $0\le \sigma \le m$,  
$$
\left\|s\right\|_{W_2^{\sigma}(\Omega)} 
\le   
C \rho^{m+d/2}  h^{-\sigma} \|s\|_{L_2(\Omega)}.
$$
\end{theorem}
%

%%%%%%%%%%%%%%%%%%%%%%%%%%%%
\section{Implications for Quasi-Interpolation and Approximation}\label{quasi_approx}
At this point, we are able to state several results that satisfactorily answer questions 
concerning interpolation, 
quasi-interpolation, and approximation properties of the spaces $V_\Xi$ and $\tilde{V}_\Xi$.
Some of these results have appeared previously in more restrictive settings while other results, 
such as pointwise error estimates for quasi-interpolation of continuous functions, are entirely new.

The first result is that the Lebesgue constant for interpolation is uniformly bounded.  
For the setting considered here (compact Riemannian manifolds and Sobolev-Mat{\'e}rn kernels),
this has been proven in \cite{HNW}. 
\begin{proposition}{\em (Lebesgue Constant, \cite[Theorem 4.6]{HNW})} \label{lebesgue_const}
 Suppose that $m>\tfrac{d}{2}$.  
 For a sufficiently dense set $\Xi\subset\mathbb{M}$ with mesh ratio $\rho$, 
 the Lebesgue constant 
$\Lambda := \sup_{\alpha\in\mathbb{M}}\sum_{\xi\in\Xi}|\chi_\xi (\alpha)|$, 
associated with the Sobolev-Mat\'ern kernel $\kappa_m$, 
is bounded by a constant depending only on $m$, $\rho$, and $\mathbb{M}$.
\end{proposition}

We remark that the key to proving this result is the pointwise exponential decay 
of the Lagrange function $\chi_\xi$, as given in \eqref{ptwise}. 
The same kind of bound also holds for local Lagrange functions. 
This can be shown by using the ``perturbation'' technique employed 
to prove \eqref{local_low_comp_eqn}. 
Similar results hold for other kernels on specific compact manifolds \cite{HNW-p}. 
In the case where the manifold is not compact, 
one typically is more interested in Lagrange functions based on finite point sets which are 
quasi-uniform with respect to a compact subset $\Omega\subset\mathbb{M}$. 
Nevertheless, a similar pointwise decay  estimate for Lagrange functions 
holds for that setting as well    \cite[Inequality 3.5]{HNRW2}.

There are two kinds of stability associated with the spaces $V_\Xi$ and $\widetilde V_\Xi$. 
The first concerns basis stability. In Proposition~\ref{lowercomparison}, 
we showed that both local and full Lagrange bases were very stable. 

The second kind of stability, which was established in \cite{HNSW}, 
concerns the $L_p$ norm of the $L_2$ projector. 
Let $W:\mathbb C^{\#\Xi} \to V(\kappa_m ,\Xi):=V_\Xi$ be a ``synthesis operator'' so 
$W:(a_\xi )_{\xi\in\Xi}\to\sum_{\xi\in\Xi}a_\xi v_\xi$ for a basis $(v_\xi )_{\xi\in\Xi}$ of $V_\Xi$.  
Likewise, let $W^* :L_1 (\mathbb{M})\to \mathbb C^{\#\Xi}$ 
be its formal adjoint $W^* :f\to(\langle f,v_\xi \rangle)|_{\xi\in\Xi}$.  
The $L_2$ projector is then 
\begin{equation}\label{L2-projector}
T_\Xi :=W(W^*W)^{-1}W^* :L_1 (\mathbb{M})\to V_\Xi
\end{equation}
in the sense that when $f\in L_2 (\mathbb{M})$, 
$T_\Xi f$ is the best $L_2$ approximant to $f$ from $V_\Xi$.

The $L_2$ norm of this projector is one -- because it is orthogonal -- 
while the $L_p$ and $L_{p^\prime}$ norms are equal because it is self-adjoint.  
Thus to estimate its $L_p$ operator norm $(1\leq p\leq\infty)$ it suffices to estimate its 
$L_\infty$ norm.

\begin{proposition}{\em ($L_2$ projector, \cite[Theorem 5.1]{HNSW})}\label{bdd_projector}
 For the Sobolev-Mat\'ern kernels, 
 for all $1\leq p\leq\infty$, the $L_p$ norm of the $L_2$ projector 
 $T_\Xi$ is bounded by a constant depending only on $\mathbb{M},\rho$ and $\kappa_m$.
\end{proposition}
For applications, the local Lagrange functions $\{b_\xi \}_{\xi\in\Xi}$ are substantially 
more computationally efficient than the full Lagrange functions.  
Nevertheless the bases $\{b_\xi \}_{\xi\in\Xi}$ and the space 
$\tilde{V}_\Xi =\mathrm{span}_{\xi\in\Xi}b_\xi$, under appropriate assumptions, 
enjoy essentially all the key properties as $\{\chi_\xi \}_{\xi\in\Xi}$ and $V_\Xi$ do.

In particular, Proposition~\ref{SobMat} shows that the spaces $V_\Xi$ and $\tilde{V}_\Xi$ 
can be quite close 
in Hausdorff distance and that the bases $\{b_\xi \}_{\xi\in\Xi}$ are slight perturbations of the bases 
$\{\chi_\xi \}_{\xi\in\Xi}$ even on compact subsets of the manifold.  
For the compact Riemannian manifold setting, under appropriate assumptions, 
the set $\{b_\xi \}_{\xi\in\Xi}$ is $L_p$ stable and each $b_\xi$ has pointwise 
polynomial decay of high order.  
This can be shown in the same way as in \cite[Thm 6.5]{FHNWW}.

A method to implement approximation from the space  $\tilde{V}_\Xi$
is by means of the quasi-interpolation operator
$$Q_\Xi f:=\sum_{\xi\in\Xi}f(\xi)b_\xi.$$
The quasi-interpolation operator provides $L_\infty$ convergence estimates at the same asymptotic rate 
as the interpolation operator.  Indeed
\begin{align*}
 |&I_\Xi f(x)-Q_\Xi f(x)|\leq\sum_{\xi\in\Xi}|b_\xi (x)-\chi_\xi (x)\| f(\xi)|\\
 &\leq C %\text{ card}
  (\#\Xi)\| f\|_{L\infty(\mathbb M)} \max_{\xi\in\Xi}\| b_\xi -\chi_\xi \|_{L\infty(\mathbb M)}.
\end{align*}
where  Lemma~\ref{compact_error} guarantees that $\| b_\xi -\chi_\xi \|_{L\infty(\mathbb M)}$ 
is as small as one likes 
depending on the ``footprint'' of $b_\xi$.  
Moreover the operators provide optimal $L_\infty$ approximation orders when the 
Lebesgue constant is uniformly bounded (see Proposition~\ref{lebesgue_const}).  
So, for example, it is shown in \cite[Cor 5.9]{HNW-p} that restricted surface spline interpolation satisfies 
$\| I_\Xi f-f\|_{L\infty(\mathbb M)} \leq Ch^\sigma$ for $f\in C^{2m}(\mathbb{S}^2 )$ when 
$\sigma=2m$ and $f\in B^{\sigma}_{\infty,\infty}(\mathbb{S}^2 )$ for $\sigma\leq 2m$.  
Thus $Q_\Xi$ inherits the same rate of approximation.

The quasi-interpolation operator also provides two more useful approximation properties.  
The first deals with pointwise error estimates for continuous functions.  
In the early 1990's, Brown \cite{A_L_Brown} showed that, for several classes of RBFs, if the density
parameter $h_\Xi$ decreased to zero for point sets $\Xi$ in compact $\Omega$, then
\[
\text{dist}_\infty (f,V_\Xi )\to 0
\]
for any continuous function $f$.  The argument given was nonconstructive.  
The next result gives pointwise error estimates when approximating an arbitrary continuous function 
$f$ on $\mathbb{M}$ in terms of its modulus of continuity.  
The result is reminiscent of a similar one for univariate splines.
%

%%%%%%%%%%%%%%%%%%%%%%%%
\subsection{Approximation rates based on local smoothness}\label{local_rates}
For a function $f$,  the global modulus of continuity  is defined as 
$\omega(f,t):=\max_{|x-y|\leq t}|f(x)-f(y)|$, 
and  the modulus of continuity at $x_0$ is $\omega(f,t,x_0 ):=\max_{|x-x_0 |\leq t}|f(x)-f(x_0 )|$. 
Recall also the constants $K$ and $J$, discussed in Section~\ref{local_lagrange}, 
and that  $\Lambda$ denotes the Lebesgue constant.

%Recall that the global modulus of continuity of $f$ is $\omega(f,t):=\max_{|x-y|\leq t}|f(x)-f(y)|$, 
% the modulus of continuity at $x_0$ is $\omega(f,t,x_0 ):=\max_{|x-x_0 |\leq t}|f(x)-f(x_0 )|$ 
% and $\Lambda$ is the Lebesgue constant.
%The constants $K$ and $J$ are discussed in Section~\ref{local_lagrange}.

%
\begin{theorem}\label{local}
 Assume the conditions and notation of Theorem \ref{main_local_bernstein} and 
 Proposition~\ref{lebesgue_const} hold.  
Then for each $x_0 \in\mathbb{M},\,f\in C(\mathbb{M})$ with $\| f\|_{L_\infty(\mathbb M)}  =1$, 
the following hold.
 \begin{enumerate}
  \item[i)] $|f(x_0 )-I_\Xi f(x_0 )|\leq\max\{\Lambda\omega(f,Kh |\log h|),2h^{J - 1} \}$
  \item[ii)] $\| f-I_\Xi f\|_{L_\infty(\mathbb M)} \leq\Lambda(K + 1)\omega(f,h|\log h|)$
  \item[iii)] $|f(x_0 )-Q_\Xi f(x_0 )|\leq\max\{\Lambda\omega(f,Kh|\log h|,x_0 ),h^{J-2}\}$
  \item[iv)] $\| f-Q_\Xi f\|_{L_\infty(\mathbb M)}=O(\| f-I_\Xi f\|_{L_\infty(\mathbb M)} )$
 \end{enumerate}
\end{theorem}
\begin{proof}
 Note that
 $$|f(x_0 )-I_\Xi f(x_0 )|\leq\sum_{\xi\in\Xi}\bigl|f(x_0 )-f(\xi )\bigr||\chi_\xi (x_0 )|+
C \| f\|_{L\infty(\mathbb M)} |1-\sum_{\xi\in\Xi}\chi_\xi (x_0)|.$$
 By Corollary \ref{lebesgue_lemma} below, 
 $|1-\sum_{\xi\in\Xi}\chi_\xi (x)|=\mathcal O(h^{2m})$. 
 Define the ball $B_{x_0 }$ by $B_{x_0 }:=B(x_0 ,Kh\ln h^{-1})$ and let its complement be denoted 
  $B^{C}_{x_0}:=\mathbb{M}\backslash B_{x_0}$.  
 Then
 \begin{align*}
  |f(x_0 )-I_\Xi f(x_0 )|
  &\leq\sum_{\xi\in B_{x_0}\cap\Xi}|f(\xi )-f(x_0 )|\,|\chi_\xi (x_0 )|\\
  %&\qquad 
  &+\sum_{\xi\in B^{C}_{x_0}\cap\Xi}|f(x_0 )-f(\xi )|\,|\chi_\xi (x_0 )|+C\|f\|_{L_\infty(\mathbb M)}h^{2m}\\
  &\leq\max_{\xi\in B_{x_0}\cap\Xi}|f(\xi )-f(x_0 )|\sum_{\xi\in B_{x_0}\cap\Xi}|\chi_\xi (x_0 )|\\
  &\qquad\quad+
  C\| f\|_{L\infty(\mathbb M)} \big(\sum_{\xi\in B^{C}_{x_0}\cap\Xi}\big(1+\tfrac{\text{\rm dist}(x_0 ,\xi)}{h}\big)^{-J}+h^{2m}\big)\\
  &\leq\Lambda\omega(f,Kh|\log h|)(x_0 )+C\max(h^{J-1},h^{2m})\|f\|_{L_\infty(\mathbb M)} .
 \end{align*}
 The second inequality follows from
 $\omega(f,Kt)\leq(K + 1)\omega(f,t)$
 and the fact that if $\omega(f,t)/t\to 0$ as $t\to 0$, then $f$ is a constant  \cite{DL}.  Inequality iii) follows from
 \begin{align*}
  |f(x_0 )-Q_\Xi f(x_0 )|
  &\leq|f(x_0)-I_\Xi f(x_0 )|+|I_\Xi f(x_0 )-Q_\Xi f(x_0 )|\\
  &\leq|f(x_0)-I_\Xi f(x_0 )|+\sum_{\xi \in \Xi}|f(\xi)|\,|\chi_\xi (x_0 )-b_\xi (x_0 )|\\
  &\leq|f(x_0)-I_\Xi f(x_0 )|+\| f\|_{L\infty(\mathbb M)} \sum_{\xi \in \Xi} \|\chi_\xi -b_\xi \|_\infty\\
  &\leq\Lambda\omega(f,Kh|\log h|)(x_0 )+Ch^{J}(\#\Xi)\\
  &\leq\Lambda\omega(f,Kh|\log h|)(x_0 )+C\rho^d h^{J-d}.
 \end{align*}
 The last inequality is clear.
\end{proof}

We remark that the pointwise estimate in the first inequality above requires only continuity at a single point,
and boundedness elsewhere. 

%%%%%%%%%%%%%%%%%%%%%%%%%%%%
\subsection{Rates for functions with higher smoothness}\label{SS:abstract_approx}
By the global boundedness of the Lebesgue constant, we know that interpolation is ``near-best''.
Similarly, by Theorem~\ref{local}\,(iv), quasi-interpolation is near-best as well. In this subsection, we 
establish precise rates of decay $\text{dist}_{\infty}(f,S(\Xi))$. This is established using an approximation
scheme similar to the one employed in \cite{DR} -- 
it uses the fact that the kernel is a fundamental solution  
for $\mathcal{L} = \sum_{j=0}^m (\nabla^j)^*\nabla^j$ (pointed out in Section \ref{SSS:Kernels})
to obtain the identity
$
f(x) 
= 
\int_{\mathbb{M}} \mathcal{L} f(\alpha) \kappa_m(x,\alpha) \mathrm{d} \alpha
$ 
for $f\in C^{2m}(\mathbb{M})$. 
As in \cite{DR}, for every $\alpha \in \mathbb{M}$, 
we use a modified kernel $\tilde{\kappa}(x,\alpha)$ constructed from $\Xi$ by taking 
$\tilde{\kappa}(\cdot,\alpha)\in S(\Xi)$, with coefficients depending continuously on $\alpha$. 
We may then replace $\kappa_m$ by $\tilde{\kappa}$ in the reproduction formula for $f$.

For $\alpha\in\mathbb{M}$, define $\Xi_{\alpha}$ as follows:
$$
\Xi_{\alpha} 
:= 
\begin{cases}
\Xi\cup \{\alpha\},& \text{dist}(\alpha,\Xi) \ge  h/2\\
 \Xi \cup \{\alpha\}\setminus \{\xi^*\}, &  \text{dist}(\alpha,\Xi) \le  h/2
 \end{cases}
$$
where $\xi^*$ is the nearest point of $\Xi$ to $\alpha$.
For this point set, we have the fill distance $h(\Xi_{\alpha},\mathbb{M}) \le 3h/2$ and 
separation radius $q(\Xi_{\alpha})\ge \min(q,h/2)$.

For every $\alpha\in \bbM$, we consider the Lagrange function $\lambda_{\alpha}\in S(\Xi_{\alpha})$ 
centered at $\alpha$.
We express this Lagrange function as
$\lambda_{\alpha} = \sum_{\xi\in\Xi_{\alpha}} A_{\alpha,\xi}\kappa_m(\cdot, \xi)$.
Let 
$
a(\xi,\alpha) 
:=
- A_{\alpha,\xi}/A_{\alpha,\alpha}
$ 
for $\xi \in \Xi_{\alpha}\setminus \{\alpha\}$.
The approximation scheme is given by way of the operator
 $$S_{\Xi} f 
 := \sum_{\xi\in\Xi}c_{\xi}\kappa_m(\cdot,\xi)  
 $$
 with 
 $
 c_{\xi} 
 = 
 \int_{\mathbb{M}} \mathcal{L} f(\alpha) a(\xi,\alpha) \mathrm{d} \alpha
 $.
 
This works because the kernel $\kappa_m$ used  in the reproduction of smooth $f$ 
can be replaced by a modified kernel
$
\tilde{\kappa}(x,\alpha) 
= 
\sum_{\xi\in\Xi_{\alpha}, \xi \ne \alpha} a(\xi,\alpha) \kappa_m(\cdot,\xi)
$,
which is a linear combination of the original kernel sampled from $\Xi_{\alpha}$. 
We measure the difference of the two kernels as:
\begin{eqnarray*}
\mathrm{err}(x,\alpha) 
&:= & 
\kappa_m(x,\alpha) - \tilde{\kappa}(x,\alpha)\\
&=&
\kappa_m(x,\alpha) - \sum_{\substack{\xi \in \Xi_{\alpha}\\ \xi \ne \alpha}} a(\xi,\alpha)  \kappa_m(\cdot,\xi) \\
&= &
\frac{1}{A_{\alpha,\alpha}}\lambda_{\alpha}(x).
\end{eqnarray*}

To further control this error, we estimate $|A_{\alpha,\alpha}|$ from below. 
We do this by applying the zeros lemma for balls \cite{HNW-p} on
the set $B(\alpha, Mh)$ 
(for a sufficiently large constant $M$ - a constant which depends only on $\mathbb{M}$ and $m$). 
Thus, we have
\begin{eqnarray}\label{replacement_first}
|\lambda_{\alpha}(\alpha)| 
&\le& 
\|\lambda_{\alpha}\|_{L_{\infty}(B(\alpha,Mh))} \nonumber\\
&\le& 
C \bigl(Mh\bigr)^{m-d/2}\|\chi_{\alpha}\|_{W_2^m(\mathbb{M})} \nonumber\\
&=& 
C h^{m-d/2}
|\langle \chi_{\alpha},\chi_{\alpha}\rangle |^{1/2}
\end{eqnarray}
 Replacing $\chi_{\alpha}(\alpha)$ with $1$ and 
 $\langle \chi_{\alpha},\chi_{\alpha}\rangle$ 
 with 
 $|A_{\alpha,\alpha}|$,
 we have a lower bound for $|A_{\alpha,\alpha}|$. 
 Namely, there is a constant $C>0$ depending only on $m,\mathbb{M}$ so that 
 \begin{equation}\label{replacement_second}
 |A_{\alpha,\alpha}| 
 \ge C h^{d-2m}
  \end{equation}

Combining (\ref{replacement_first}), (\ref{replacement_second}) and the pointwise decay rates
for the Lagrange functions, we obtain the bound
 \begin{equation}\label{error_kernel1}
 |\mathrm{err}(x,\alpha)| 
 \le 
 C  
 \rho^{m-d/2}
h^{2m-d} 
 e^{-\nu  \left(\frac{\mathrm{dist}(x,\alpha)}{ h}\right)}.
 \end{equation}

 At this point, we have the following result for approximation of smooth functions.

 \begin{theorem}\label{topsmoothness}
 For $1\le p<\infty$ and 
 $f\in W_p^{2m}(\mathbb{M})$,
 or $f\in C^{2m}(\mathbb{M})$ when $p=\infty$,
 there is a constant $C<\infty$ 
 depending only on $m$ and $\mathbb{M}$ so that
 $$
 \left\|{f - S_{\Xi}f}\right\|_{L_p(\mathbb M)} 
 \le 
 C h^{2m}\|f\|_{W_p^{2m}(\mathbb{M})}
 $$
  \end{theorem}
\begin{proof}
The %$L_p$ 
error $\left\|{f - S_{\Xi}f}\right\|_{p}$ is
bounded by the norm of the integral operator 
$ \mathrm{Err}: g \mapsto \int_{\mathbb{M}} g(\alpha) |\mathrm{err}(\cdot,\alpha) | \mathrm{d} \alpha$, 
which has non-negative
kernel $|\mathrm{err}(x,\alpha)|$.
 Indeed, we have 
 $
 |f(x) - S_{\Xi}f(x)| 
 \le 
 \int_{\mathbb{M}}| \mathcal{L} f(\alpha) | \, |\mathrm{err}(x,\alpha)| \, \mathrm{d} \alpha
 $, 
 so
 $$
 \|f - S_{\Xi}f\|_{L_p(\mathbb M)} 
 \le 
 \|\mathcal{L} f\|_{L_p(\mathbb M)} \| \mathrm{Err}\|_{L_p\to L_p}.
 $$
 We estimate the norm of this operator on $L_1$ and $L_{\infty}$ -- the $L_p$ result then follows  by interpolation.
In other words, %we estimate
$ 
\|\mathrm{Err}\|_{1\to1} 
\le 
\max_{\alpha\in\mathbb{M}} \int_{\mathbb{M}} |{\mathrm{err}(x,\alpha)}| \mathrm{d} x
$ 
and 
$
\|\mathrm{Err}\|_{\infty \to\infty} 
\le 
\max_{x\in\mathbb{M}} \int_{\mathbb{M}} |{\mathrm{err}(x,\alpha)}| \mathrm{d} \alpha
$.
Using (\ref{error_kernel1}) and symmetry, both are bounded by
\begin{eqnarray}
 C\rho^{m-d/2} h^{2m-d}\max_{\alpha\in \mathbb{M}}
 \int_{\mathbb{M}} e^{-\nu  \left(\frac{\mathrm{dist}(x,\alpha)}{ h}\right)} \mathrm{d} x
\le C \rho^{m-d/2} h^{2m}
\end{eqnarray}
and the theorem follows.
 \end{proof}

 A result for lower smoothness is also possible.
 Let us define the Besov space  $B_{p,\infty}^{\sigma}(\mathbb{M})$ as a real interpolation space 
 between $L_p(\mathbb{M})$ and $W_p^{2m}(\mathbb{M})$.
Let $B_{p,\infty}^{\sigma}(\mathbb{M})$  be the set of (equivalence classes) of functions  
 $f\in L_p(\mathbb{M})$ for which the expression
 \begin{equation}\label{besov_def}
 \|f\|_{B_{p,\infty}^{\sigma}(\mathbb{M})}
 := 
 \sup_{t>0} t^{-\sigma/2m}
 \inf_{g\in W_{p}^{2m}(\mathbb{M})} 
 \bigl(\|f-g\|_{L_p(\mathbb{M})}+ t\|g\|_{W_{p}^{2m}(\mathbb{M})}\bigr)
 \end{equation}
 is finite. 
 (When $p=\infty$, we make the the usual replacements  of $ L_p(\mathbb{M})$ by  
 $C(\mathbb{M})$ and
 $W_p^{2m}(\mathbb{M})$ by $C^{2m}(\mathbb{M})$.)
 That this is a Banach space and the above is a norm can be found in \cite{adams-f}, \cite{DL} 
 or \cite{triebel1992}.
 We note in particular that \cite{triebel1992} shows this definition is equivalent to  other standard, intrinsic 
 constructions of Besov spaces on manifolds, 
 and relates these to the Sobolev scale and other families of smoothness spaces. 
 Of special interest is the case of the H{\"o}lder spaces with fractional exponent:
 $C^{\sigma}(\mathbb{M}) = B_{\infty,\infty}^{\sigma}(\mathbb{M})$.
 \begin{theorem}
 Let $f\in B_{p,\infty}^{\sigma}(\mathbb{M})$ for $1\le p\le \infty$ and $0<\sigma \le 2m$. Then we have
 $$
 \mathrm{dist}_{p,\mathbb{M}}(f, S(\Xi))
 \le 
 Ch^{\sigma} \|f\|_{B_{p,\infty}^{\sigma}(\mathbb M)}.
 $$
 \end{theorem}
 \begin{proof}
 The follows from a standard $K$-functional argument, 
 by splitting $f = g+(f-g)$, with $g\in W_p^{2m}(\mathbb{M})$ (or $C^{2m}(\mathbb{M})$) 
 and $f-g\in L_p(\mathbb{M})$ (or $C(\mathbb{M})$). 
 In particular, for $h>0$, set $t=h^{2m} $.
 and find $g$ so that
 $$
 \|f-g\|_{L_p(\mathbb{M})} + t \|g\|_{W_p^{2m}(\mathbb{M})}
 \le 2 t^{\sigma/2m}  \|f\|_{B_{p,\infty}^{\sigma}(\mathbb{M})}. 
 $$
 This ensures that 
 $$
  \|f-g\|_{L_p(\mathbb{M})} 
  \le 
  2 h^{\sigma} \|f\|_{B_{p,\infty}^{\sigma}(\mathbb{M})} 
  \quad 
  \text{ and }
  \quad
  \|g\|_{W_p^{2m}(\mathbb{M})}
  \le 
  2 h^{\sigma-2m} \|f\|_{B_{p,\infty}^{\sigma}(\mathbb{M})}.
 $$
 Finally, we take $S_{\Xi} g$ as our approximant to $f$, obtaining the desired result by applying
 the triangle inequality and Theorem~\ref{topsmoothness}.
 \end{proof}

A drawback of the previous results in this section  is that 
the approximation scheme $S_{\Xi}$ is not easy to implement.  
The good news is that the stability of the schemes $I_\Xi$, $Q_\Xi$  and $T_{\Xi}$ 
imply that these operators
 inherit the same convergence rate.
 This is a consequence of the Lebesgue constants 
being bounded (Proposition~\ref{lebesgue_const}) and the small error between $I_\Xi$ and $Q_\Xi$.

\begin{corollary}\label{lebesgue_lemma}
There exists a constant $C>0$ so that
for $0<\sigma\le 2m$ and $f\in C^{\sigma}(\mathbb{M})$, 
we have
$$
\|f- I_{\Xi}f \|_{L_\infty(\mathbb M)} \le C h^{\sigma} \|f\|_{C^{\sigma}(\mathbb{M})} \ \text{\rm and }
\|f- Q_{\Xi}f \|_{L_\infty(\mathbb M)} \le C h^{\sigma} \|f\|_{C^{\sigma}(\mathbb{M})}.
$$
For $0<\sigma\le 2m$, $1\le p\le \infty$ and 
$f\in B_{p,\infty}^{\sigma}(\mathbb{M})$ or $f\in W_p^{2m}(\mathbb{M})$ when $\sigma=2m$, 
we have
$$\|f- T_{\Xi}f \|_{L_p(\mathbb M)} \le C h^{\sigma} \|f\|_{B_{p,\infty}^{\sigma}(\mathbb{M})},$$
where $T_\Xi$ is the least-squares projector defined in \eqref{L2-projector}.
\end{corollary}

%%%%%%%%%%%%%%%%%%%%%%%%%
\subsection{Approximation on bounded regions}
As a final note, we observe that the approximation power of spaces $S(X)$ on $\bbM$, 
where $X$ is dense in $\bbM$,
extends to the setting of approximation over a compact domain $\Omega\subset \bbM$ having a Lipschitz boundary and satisfying Assumption~\ref{Man}, using $\tilde{V}_{\Xi}$, with $\Xi$ 
dense in the union of $\Omega$ and an ``annulus'' around $\Omega$.

Our final result shows that optimal $L_\infty$ approximation rates, when approximating a smooth function $f$ on $\Omega$, can be obtained from data sites 
either inside or ``close'' to $\Omega$.  The result illustrates the local nature of the basis $\{ b_\xi \}$. 

Let $f\in C^\sigma (\Omega)$, where $\sigma>0$ is an integer, and let $\tilde f \in C^\sigma (\mathbb{M})$ be a smooth extension of $f$ to $\mathbb{M}$, i.e., $\tilde f |_\Omega =f|_\Omega$.  Suppose that $\mathcal{A}=\{x\in\mathbb{M}\backslash\Omega:\text{ dist}(x,\Omega)\leq Kh|\log h|\}$ and that $\Xi$ is a finite set contained in $\Omega\cup\mathcal{A}$, with fill distance $h$.  In addition, let $\widetilde \Xi$ be a quasi-uniform extension of $\Xi$ to all of $\bbM$, as given in Lemma \ref{Extension}.  Finally, let $\kappa_m$ be a kernel as described in Section~\ref{SSS:Kernels} with associated spaces
\[
\tilde{V}_{\Xi}=\mathrm{span}_{\xi\in \Xi}b_\xi\ \text{ and }\ \tilde{V}_{\widetilde \Xi}=\mathrm{span}_{\xi\in \widetilde \Xi}b_\xi.
\]

\begin{theorem} If $\sigma\le 2m$, then 
$$
\text{\rm dist}_{\infty,\Omega}(f,\tilde{V}_X )\sim \text{\rm dist}_{\infty,\mathbb{M}}(f_e ,\tilde{V}_{\widetilde X}) \le \left\{
\begin{aligned}
 Ch^\sigma\|f\|_{C^\sigma(\Omega)}\\
Ch^\sigma \|f_e\|_{C^\sigma(\mathbb M)}.
\end{aligned}\right.
$$	
\end{theorem}
\begin{proof} By the global boundedness of the Lebesgue constant $\Lambda$, we know that interpolation is near-best approximation. Similarly, by Theorem~\ref{local}(iv), quasi-interpolation is near-best approximation as well. Hence, with $J=K\nu/2 - 2m +d$, we have that
\begin{align*}
\max_{x \in \Omega}|f(x) - &\sum_{\xi \in \Xi}\tilde f(\xi)b_{\xi}(x)|
\leq 
\max_{x \in \Omega}|f(x) - \sum_{\xi \in \Xi_e}f_e(\xi)b_{\xi}(x)| \\
& \qquad+\max_{x \in \Omega}\sum_{\xi\in \widetilde \Xi\setminus\Xi}|\tilde f(\xi)b_{\xi}(x)| \\
&\leq  
\max_{x \in \Omega}|f(x) - \sum_{\xi \in \widetilde \Xi}\tilde f(\xi)b_{\xi}(x)| +
\| f \|_{\infty}\sum_{\xi\in \widetilde \Xi\setminus\Xi}\left(1+\tfrac{\text{\rm dist}(x_0 ,\xi)}{h}\right)^{-J}\\
&\leq  \max_{x \in \Omega}|f(x) - \sum_{\xi \in \widetilde \Xi}\tilde f(\xi)b_{\xi}(x)| + 
\| f\|_{\infty}h^{J - 1}
\end{align*}
where $h^J$ can be chosen small compared to the first term, because, by \eqref{general_norm}, the parameter $K$ in $J$ can be chosen large enough for this to happen. The theorem then follows from Corollary~\ref{lebesgue_lemma}.
\end{proof}

We remark that one only needs to have \emph{local} information in a small annulus outside $\Omega$ to obtain full approximation order.  Moreover, as previously discussed, approximation order on manifolds is often known.

%\begin{appendix}
\section{Volume Comparisons }\label{appendix_vol_comp}

\begin{proposition}\label{manifold}
We assume that $\bbM$ is a closed, compact, connected $d$-dimensional $C^\infty$ Riemannian manifold. 
%with bounded geometry and positive injectivity radius.
There exist constants $0<\alpha_\bbM<\beta_\bbM<\infty$  
so that any ball $B(x,r)$ %with radius $r<\mathrm{r}$
satisfies
\begin{equation}\label{balls2}
\alpha_\bbM r^d \le \mathrm{vol}(B(p,r))\le \beta_\bbM r^d
\end{equation}
for all $0\le r \le \rmd_\bbM$.
\end{proposition}

\begin{proof}
By Property~\ref{bounded_geometry}, $\bbM$ has bounded geometry, so the Ricci curvature, 
$\mathrm{Ric}$, is bounded below. 
Hence, there is a $k\in \RR$ such that $\mathrm{Ric}\ge (d-1)k$. 
Let $\bbM^d_k$ denote one of the canonical manifolds (sphere, $\RR^d$, hyperbolic space) having constant sectional curvature $k$. 
In addition, let $p\in \bbM$, $\tilde p\in \bbM^d_k$, $V_r:= \mathrm{vol}(B(p,r))$ and $V_r^k :=\mathrm{vol}(B^k(\tilde p,r))$. 
The Bishop-Gromov Comparison theorem states that the ratio $V_r/V_r^k$ is non increasing and, 
as $r\downarrow  0$, $V_r/V_r^k \to 1$, no matter which $p,\tilde p$ are chosen.  
Since $\mathrm{Ric}$ may become negative, 
we can handle all of the cases at once by assuming that $k<0$, 
which means that $\bbM^d_k$ is a hyperbolic space. 

The model that we take for $\bbM^d_k$ will be the Poincar\'e ball, 
so that $\bbM^d_k = \{ x=(x^1, \ldots ,x^d) \in \RR^d \mid \|x\|_2^2 < -4/k\}$. 
Let $A:=1+(k/4)\|x\|_2^2$. 
In these coordinates, the Riemannian metric is given by $g_{jk} = \delta_{jk}/A^2$; equivalently, 
$ds^2 = \sum_{j=1}^d (dx^j)^2/A^2$. 
We want to introduce geodesic normal coordinates, centered at $x^j=0$, $j=1,\ldots,d$. 
Let $t\ge 0$ and set $x^j = \frac{2}{\sqrt{|k|}} \tanh(\sqrt{|k|}t/2)\xi^j$, 
where $\xi = (\xi^1,\ldots,\xi^d)\in \S^{d-1}$. 
A straightforward computation shows that 
\begin{equation}\label{arc_length_k}
ds^2 = dt^2 + \frac{1}{|k|}\sinh^2(\sqrt{|k|}t)ds^2_{\S^{d-1}},
\end{equation}
where $t$ the length of the geodesic joining the origin to $x^j= t\xi^j$ .
It follows that the volume element in these coordinates is
\begin{equation}\label{volume_element_k}
d\mu_k = \frac{1}{\sqrt{|k|}^{\,d-1}}\sinh^{d-1}(\sqrt{|k|}t)dt d\mu_{\S^{d-1}},
\end{equation}
and, consequently,
\begin{equation}
\label{volume_k}
V_r^k=\mathrm{vol} (B^k(\tilde p, r)) = \frac{1}{\sqrt{|k|}^{\,d-1}}\omega_{d-1} \int_0^r \sinh^{d-1}(\sqrt{|k|}t)dt.
\end{equation}

We will need bounds on $V_r^k$ for $r\le R$, where $R$ is fixed. 
These are easy to obtain, since $1\le \frac{\sinh(x)}{x}\le \frac{\sinh(X)}{X}$ for all $0\le x\le X$. 
Just take $X=\sqrt{|k|}R$:
\[
1\le \bigg(\frac{\sinh(\sqrt{|k|}t)}{\sqrt{|k|}t}\bigg)^{d-1}
\le 
\bigg(\frac{\sinh(\sqrt{|k|}R)}{\sqrt{|k|}R}\bigg)^{d-1} := \beta_{d,k,R}.
\]
Multiplying both sides by $\omega_{d-1}t^{d-1}$ and integrating results in this inequality:
\[
\frac{\omega_{d-1}}{d} r^d 
\le 
\int_0^r \omega_{d-1}\bigg(\frac{\sinh(\sqrt{|k|}t)}{\sqrt{|k|}t}\bigg)^{d-1}t^{d-1}dt 
\le 
\beta_{d,k,R} \frac{\omega_{d-1}}{d} r^d.
\]
Using this in \eqref{volume_k} results in 
\begin{equation}
\label{volume_k_bounds}
\frac{\omega_{d-1}}{d} r^d \le V_r^k \le \beta_{d,k,R} \frac{\omega_{d-1}}{d} r^d.
\end{equation}

We can now employ the Bishop-Gromov Theorem to obtain \eqref{balls}. 
Since $V_r/V_r^k$ is non increasing and tends to 1 as $r\downarrow 0$, 
we have that $V_r\ge V_r^k \ge \frac{\omega_{d-1}}{d} r^d $. 
Also, we have that $V_r/V_r^k\le V_{\rmd_\bbM}/V_{\rmd_\bbM}^k$. 
Thus, $V_r\le \big(V_{\rmd_\bbM}/V_{\rmd_\bbM}^k)V_r^k$. 
Employing \eqref{volume_k_bounds} in conjunction with these inequalities yields
\[
\frac{\omega_{d-1}}{d} r^d 
\le 
V_r \le \beta_{d,k,{\rmd_\bbM}} \frac{\omega_{d-1}}{d} \big(V_{\rmd_\bbM}/V_{\rmd_\bbM}^k\big)r^d.
\]
We want to refine this. 
To do that, we begin by observing that $\overline{B(p,\rmd_\bbM)} = \bbM$, 
because no point in $\overline{B(p,\rmd_\bbM)}$ is at a distance from $p$ greater than the diameter $\rmd_\bbM$; 
thus, $V_{\rmd_\bbM}=\mathrm{vol}(\bbM)$. 
Next, by \eqref{volume_k_bounds}, $V_{\rmd_\bbM}^k\ge \frac{\omega_{d-1}}{d} \rmd_\bbM^d$. 
Finally, using these in the inequality above yields
\[
\frac{\omega_{d-1}}{d} r^d \le V_r \le \beta_{d,k,\rmd_\bbM} \rmd_\bbM^{-d}\mathrm{vol}(\bbM)r^d,
\]
from which \eqref{balls} follows with $\alpha_\bbM=\frac{\omega_{d-1}}{d} $ 
and $\beta_\bbM =\beta_{d,k,\rmd_\bbM} \rmd_\bbM^{-d}\mathrm{vol}(\bbM)$. 
\end{proof}
%\end{appendix}

%%%%%%%%%%%%%%%%%%%%%%%%%%%%%%%%%%%%%%%%%%%%%%%%%%%%%%%%%%%%%%%%%%%%%%%%%%%%%%%%%%%%%%%%%%%
%%% The acknowledgements
%\begin{acknowledgement}
%Thomas Hangelbroek is supported by grant DMS-1413726 from the National Science Foundation.
%Francis Narcowich and Joseph Ward are supported by grant DMS-1514789 from the National Science Foundation.
%Christian Rieger is supported by (SFB) 1060 of the Deutsche Forschungsgemeinschaft.
%\end{acknowledgement}

%%%%%%%%%%%%%%%%%%%%%%%%%%%%%%%%%%%%%%%%%%%%%%%%%%%%%%%%%%%%%%%%%%%%%%%%%%%%%%%%%%%%%%%%%%%
%%% The bibliography
%
% BibTeX users please use
%\bibliographystyle{spmpsci}
%\bibliography{mybiblio}
%   and then copy and paste the content of the .bbl file here for the final version.
%
% E.g.:

\end{document}